\theoremstyle{plain}
\newtheorem{thm}{Theorem}[section]
\newtheorem{lem}[thm]{Lemma}
\newtheorem{prop}[thm]{Proposition}
\theoremstyle{definition}
\newtheorem{defn}[thm]{Definition}
\newtheorem*{goal}{MAIN GOAL}
\newtheorem{cla}[thm]{Claim}
\numberwithin{equation}{section}
\def\R1{\widetilde{R}}
\def\T1{\widetilde{T}}
\def\dist{\operatorname{dist}}
\def\supp{\operatorname{supp}}
\def\Lip{\operatorname{Lip}}
\def\eps{\varepsilon}
\def\R{\mathbb{R}}
\def\dy{\mathcal{D}}
\def\dysel{\mathcal{D}_{\operatorname{sel}}}
\def\dyselA{\widehat{\mathcal{D}}_{\operatorname{sel}}}
\def\wh{\widehat}
\def\wt{\widetilde}
\def\XXint#1#2#3{{\setbox0=\hbox{$#1{#2#3}{\int}$}
     \vcenter{\hbox{$#2#3$}}\kern-.5\wd0}}
\begin{document}

\title[The boundedness of $s$-dimensional CZOs]
{On the boundedness of non-integer dimension Calder\'{o}n-Zygmund Operators with antisymmetric kernels}

\author[B. Jaye]{Benjamin Jaye}
\author[F. Nazarov]{Fedor Nazarov}
\address{Department of Mathematical Sciences, Kent State University, Kent, Ohio 44240, USA}
\email{bjaye@kent.edu}\email{nazarov@math.kent.edu}
\thanks{Jaye supported in part by NSF DMS-1500881. Nazarov supported in part by NSF DMS-1265623.}
\date{\today}

\begin{abstract} We characterize the non-atomic measures $\mu$ for which all Calder\'{o}n-Zygmund operators with antisymmetric kernels of a fixed non-integer dimension $s$ are bounded in $L^2(\mu)$ in terms of a positive quantity, the Wolff energy.
\end{abstract}

\maketitle
\tableofcontents

\section{Introduction}

 Fix $d\geq 1$ and $s\in (0,d)$.  A smooth $s$-dimensional Calder\'{o}n-Zygmund (CZ) kernel is an odd function $K:\R^d\backslash\{0\}\rightarrow \R$ satisfying
$$|K(x)|\leq \frac{1}{|x|^s}\text{ and }|\nabla K(x)|\leq \frac{1}{|x|^{s+1}}
$$
for every $x\neq 0$.

Fix a non-atomic locally finite Borel measure $\mu$.  We say that all $s$-dimensional Calder\'{o}n-Zygmund operators (CZOs) are bounded in $L^2(\mu)$ if there is a constant $C>0$ such that for every CZ kernel $K$,
\begin{equation}\label{CZObdd}\sup_{\eps>0} \int_{\R^d}\Bigl|\int_{\R^d\backslash B(x,\eps)}K(x-y)f(y)d\mu(y)\Bigl|^2d\mu(x)\leq C\|f\|^2_{L^2(\mu)},
\end{equation}
for every $f\in L^2(\mu)$.\\

The purpose of this article is to give a characterization of those non-atomic measures for which all $s$-dimensional CZOs are bounded when $s\not\in \mathbb{Z}$.   We prove the following theorem:

\begin{thm}\label{thm1}  Fix $s\not\in \mathbb{Z}$.  Let $\mu$ be a non-atomic locally finite Borel measure.   Then all $s$-dimensional CZOs are bounded in $L^2(\mu)$ if and only if there is a constant $C>0$ such that
\begin{equation}\label{wolffenergy}\mathcal{W}_2(\mu, Q)\leq C\mu(Q)
\end{equation}
for every cube $Q\subset \R^d$, where $\mathcal{W}_2(\mu, Q)$ denotes the Wolff energy
$$\mathcal{W}_2(\mu, Q)=\int_{Q}\int_0^{\infty}\Bigl(\frac{\mu(Q\cap B(x,r))}{r^s}\Bigl)^2\frac{dr}{r}d\mu(x).
$$
\end{thm}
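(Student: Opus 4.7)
The proof splits into two directions, which I would attack separately. The necessity of the Wolff energy condition (boundedness of all CZOs implies \eqref{wolffenergy}) is, in problems of this flavor, typically the shorter half, while the sufficiency direction requires substantial machinery.

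For necessity, my strategy is to exploit a carefully chosen family of antisymmetric CZ kernels whose $L^2(\mu)$-boundedness detects the Wolff energy. For each scale $r$ I would construct a smooth antisymmetric $s$-dimensional CZ kernel $K_r$ essentially concentrated near that scale, selected so that on average (over rotations, or over a small finite family of such kernels) the quantity $|T_{K_r}^\mu 1_Q(x)|$ is bounded below by a constant multiple of $\mu(Q \cap B(x,r))/r^s$ for $x \in Q$. The hypothesis $s \notin \mathbb{Z}$ is essential here: for integer $s$, flat measures supported on $s$-planes produce a vanishing Riesz transform despite large density, defeating any Wolff-type lower bound, while for non-integer $s$ no such flat configuration exists. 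Integrating the square-function inequality $\int_0^\infty |T_{K_r}^\mu 1_Q(x)|^2 \tfrac{dr}{r} \, d\mu(x)$ on $Q$ and applying \eqref{CZObdd} with $f = 1_Q$ would then yield $\mathcal{W}_2(\mu, Q) \leq C \mu(Q)$.

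For sufficiency, my plan is to use a non-homogeneous $Tb$ theorem in the spirit of Nazarov--Treil--Volberg and Tolsa. First, fix a dyadic lattice adapted to $\mu$ (e.g., David--Mattila cells). Second, run a corona/stopping-time decomposition that terminates at cubes where the localized Wolff integrand becomes too concentrated relative to $\mu$, or where the density of $\mu$ grows too fast. Third, on each stopping region, construct an accretive test function $b$ compatible with the antisymmetry of the kernel. Fourth, estimate the bilinear pairings $\langle T^\mu_K b_Q, b_{Q'}\rangle$ between cubes at different scales, converting the dyadic sum into an integrated energy expression controlled by $\mathcal{W}_2$. The antisymmetry of $K$ enters crucially: symmetrizing $K(x-y)b(x)b(y)$ against $K(y-x)b(y)b(x) = -K(x-y)b(y)b(x)$ introduces a difference $(b(x)-b(y))$, which localizes the pairing to small scales and turns what would otherwise be a signed bilinear sum into a positive, energy-type expression naturally bounded by the Wolff integrand.

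The principal obstacle is the sufficiency half, specifically the coordination of the stopping-time decomposition with the continuous Wolff potential. The difficulty is that \eqref{wolffenergy} is not a pointwise density bound but an integrated quantity across all scales, so the standard non-doubling $Tb$ machinery has to be refined to exploit averages of the Wolff integrand simultaneously. One must also verify a flatness dichotomy ensuring that when the Wolff energy is small the measure is sufficiently spread out for a standard test function to work, and that when it is large the stopping time fires while the resulting bad region carries controlled $\mu$-mass. Implementing this dichotomy quantitatively, and combining it with the antisymmetry of the kernel and the non-integer dimension $s$, is where the bulk of the technical work lies.
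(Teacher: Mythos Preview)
You have the two directions reversed in difficulty. In this paper the sufficiency of the Wolff condition \eqref{wolffenergy} for the boundedness of all $s$-dimensional CZOs is the easy half: it follows from an elementary symmetrization of the kernel (the antisymmetry turns $K(x-y)f(x)f(y)$ into a difference that is controlled pointwise by $|x-y|^{-s}$ times a density, and then the Wolff bound feeds into a straightforward $T(1)$ verification). No corona decomposition or $Tb$ machinery is needed, and the argument is short enough to be relegated to an appendix of a companion paper.

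The necessity direction is the substantive content, and your proposed attack on it has a genuine gap. You want kernels $K_r$ so that, after averaging over rotations or a finite family, $|T^\mu_{K_r}\mathbf{1}_Q(x)|$ dominates $\mu(Q\cap B(x,r))/r^s$. This cannot hold. Take $\mu=\chi_{B(0,M)}m_d$: for $x$ well inside $B(0,M)$ and $r\ll M$, every odd single-scale convolution $T^\mu_{K_r}\mathbf{1}_Q(x)$ vanishes by symmetry, while $\mu(B(x,r))/r^s\sim r^{d-s}$ is large. More generally, any measure that is locally symmetric about $x$ kills all such oscillation coefficients without constraining the density. The non-integrality of $s$ does not rescue you here; it only rules out \emph{global} flat configurations, not local ones.

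The paper's route around this obstruction is indirect. From boundedness of all CZOs one gets a Littlewood--Paley bound for every odd compactly supported Lipschitz bump, and hence control of a Lipschitz oscillation coefficient $\Theta^A_{\mu,\varphi}(Q)$ against a Riesz system. One then introduces a rule $\mathcal{F}(\mu)\subset\mathcal{D}$ via two domination procedures (upward and downward) so that (i) the cubes in $\mathcal{F}(\mu)$ already carry a fixed fraction of the dyadic Wolff sum, and (ii) on $\mathcal{F}(\mu)$ the oscillation coefficient is \emph{provably} large. Property (ii) is established by contradiction: rescaling a sequence of putative counterexample cubes and passing to a weak limit produces a \emph{smoothly reflectionless} measure $\mu$ (one with $\varphi*\mu$ constant on $\supp\mu$ for every odd $\varphi\in\Lip_0$). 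Such measures are then classified: they are sums of $\mathcal{H}^k$ on parallel translates of a $k$-plane. The downward-domination condition on the blown-up cubes forces a weak density bound that excludes $k\le\lfloor s\rfloor$, while the growth at infinity (from upward domination) excludes $k\ge\lfloor s\rfloor+1$. Since $s\notin\mathbb{Z}$ there is no integer in between, and the contradiction closes. The role of $s\notin\mathbb{Z}$ is thus not to furnish a pointwise lower bound as you suggest, but to create the gap $\lfloor s\rfloor<s<\lfloor s\rfloor+1$ at the very end of a compactness argument.
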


The most notable point in the characterization is that it gives the equivalence between the $L^2(\mu)$ boundedness of a collection of operators associated to odd (and so sign changing) kernels, and the $L^2(\mu)$ boundedness of a \emph{positive} (albeit non-linear) operator.\\

The sufficiency of the Wolff energy condition (\ref{wolffenergy}) for the boundedness of all CZOs was essentially proved in the paper by Mateu-Prat-Verdera \cite{MPV}, see also \cite{ENV}, and is not particularly subtle.  The proof consists of an elementary symmetrization trick and an application of the $T(1)$-theorem, see Appendix A of \cite{JN2} for a concise proof in the generality required for Theorem \ref{thm1}.  As such, we shall only be concerned with the statement that the boundedness of all CZOs implies that (\ref{wolffenergy}) holds.\\

It is of great interest whether the boundedness of the $s$-Riesz transform alone (the CZO with (vector valued) kernel $K(x) =\tfrac{x}{|x|^{s+1}}$) already implies that the Wolff energy condition (\ref{wolffenergy}) holds. This was shown to be the case when $s\in (0,1)$ by Mateu-Prat-Verdera \cite{MPV}, where the condition (\ref{wolffenergy}) was first explicitly introduced in relation to the boundedness of singular integral operators (a similar condition involving gauges of Hausdorff measures had previously appeared in Mattila's paper \cite{Mat1} on the analytic capacity of certain Cantor sets). \\

Together with Maria Carmen Reguera and Xavier Tolsa \cite{JNRT}, we recently showed that the Mateu-Prat-Verdera characterization for the boundedness of the Riesz transform continues to hold in the case $s\in (d-1,d)$.  It is an open problem whether the boundedness of the $s$-Riesz transform is equivalent to the Wolff energy condion (\ref{wolffenergy}) if $s\in (1,d-1)\backslash \mathbb{N}$.  An interesting intermediate problem would be to show that the $L^2(\mu)$ boundedness of all \emph{homogeneous} CZOs is enough to conclude that (\ref{wolffenergy}) holds.  \\


One can view Theorem \ref{thm1} as a non-integer variant of a theorem of David and Semmes \cite{DS}, which states that if $s\in \mathbb{Z}$ and $\mu$ is an Ahlfors-David regular measure, then all $s$-dimensional CZO's are bounded in $L^2(\mu)$ if and only if $\mu$ is uniformly rectifiable\footnote{In \cite{DS}, a slightly weaker notion of the boundedness in $L^2(\mu)$ of all CZOs is considered: For every CZ kernel $K$, the inequality (\ref{CZObdd}) holds for a constant $C=C_K$ that  may depend on the kernel. The proof of Theorem \ref{thm1} may be modified to obtain the same conclusion under this relaxed condition as well.}.  The analysis that follows shares quite a few similarities with the proof of the David-Semmes theorem.  Using their description of Ahlfors-David regular symmetric measures, Mattila and Preiss \cite{MP} showed that the David-Semmes theorem continues to hold if one only assumes that all operators with kernels of the form $\varphi(|x|)\tfrac{x}{|x|^{s+1}}$ are bounded in $L^2(\mu)$, where $\varphi\in C^{\infty}(0,\infty)$ satisfies $|\varphi^{(k)}(t)|\leq C_kt^{-k}$ for all $t>0$ and $k\geq 0$.\\ 

We remark that no complete analogue of Theorem \ref{thm1} is known in the case of integer dimension CZOs except for $d=2$, in which case one can refer to Tolsa's recent memoir \cite{Tol}.  An intriguing sufficient condition for the boundedness of all integer dimensional CZOs, involving  Jones' $\beta$-numbers, was recently found by Girela-Sarri\'{o}n \cite{G}, building upon previous work by Azzam-Tolsa \cite{AT}.  We wonder if the necessity of the Girela-Sarri\'{o}n condition could be proved by modifying the analysis carried out in this paper.\\

We follow the same high level scheme as in the aforementioned paper \cite{JNRT}, but the analysis is significantly simpler.   The boundedness of all CZO's allows one to argue, via a standard argument already used in \cite{DS}, that a certain collection of square functions is bounded.  More precisely, for every odd Lipschitz function $\varphi$ with compact support, we have that
$$\int_{\R^d}\sum_{k\in \mathbb{Z}} \Bigl|\int_{\R^d}\frac{\varphi(\tfrac{x-y}{2^k})}{2^{ks}} f(y)d\mu(y)\Bigl|^2d\mu(x)\leq C(\varphi)\|f\|^2_{L^2(\mu)},
$$
for every $f\in L^2(\mu)$.  By introducing (a simplified version of) the machinery used in \cite{JN2, JNRT}, we reduce matters to describing the structure of \emph{smoothly reflectionless measures}, these are measures $\mu$ for which the convolution $\varphi*\mu$ is constant on the support of $\mu$ for every compactly supported odd Lipschitz continuous function $\varphi$.  It turns out that smoothly reflectionless measures are rather easy to describe.  This comes in sharp contrast with reflectionless measures for the $s$-Riesz transform, where there are countless open problems.  In fact, one of our motivations for writing this paper is to give an accessible introduction to some of the mathematics in \cite{JNRT}, along with \cite{JN1,JN2}, without making the reader suffer through the technical difficulties.

\section{Preliminaries}

\subsection{Notation}

\begin{itemize}
\item A constant $C>0$ shall refer to a constant that may change from line to line.  Any constant may depend on $d$ and $s$ without mention.  If a constant depends on parameters other than $d$ and $s$, then these parameters are indicated in parentheses after the constant.
\item We denote the closure of a set $E$ by $\overline{E}$.
\item For $x\in \R^d$ and $r>0$, $B(x,r)$ denotes the open ball centred at $x$ with radius $r$.
\item By a measure, we shall always mean a non-negative locally finite Borel measure.
\item We denote by $\supp(\mu)$ the closed support of $\mu$, that is, $$\supp(\mu) = \mathbb{R}^d\backslash \bigl\{\cup B : B \text{ is an open ball with }\mu(B)=0\bigl\}.$$
\item For a closed set $E$, we shall denote by $\mu|E$ the restriction of the measure $\mu$ to $E$.
\item For $\beta\geq 0$, we denote by $\mathcal{H}^{\beta}$ the $\beta$-dimensional Hausdorff measure.
\item We set $\langle f,g\rangle_{\mu} = \int_{\R^d}fg\,d\mu$.
\item For a cube $Q\subset \R^d$, $\ell(Q)$ denotes its side-length. For $A>0$, we denote by $AQ$ the cube concentric to $Q$ of side-length $A\ell(Q)$.
\item We define the ratio of two cubes $Q$ and $Q'$ by
$$[Q':Q]=\Bigl|\log_2 \frac{\ell(Q')}{\ell(Q)} \Bigl|.$$
\item The density of a cube $Q$ (with respect to a measure $\mu$) is given by $\displaystyle D_{\mu}(Q)=\frac{\mu(Q)}{\ell(Q)^s}.$
\item For a set $U\subset \R^d$, we denote by $\Lip_0(U)$ the set of Lipschitz continuous functions on $\R^d$ that are compactly supported in the interior of $U$.  We define the homogeneous Lipschitz norm of $f\in \Lip_0(U)$ by $$\|f\|_{\Lip}= \sup_{x,y\in \R^d, \, x\neq y}\frac{|f(x)-f(y)|}{|x-y|}. $$
\item We say that a sequence of measures $\mu_k$ converges weakly to a measure $\mu$ if
$$\lim_{k\to\infty}\int_{\R^d}fd\mu_k = \int_{\R^d}fd\mu,
$$
for every $f\in C_0(\R^d)$ (the space of continuous functions on $\R^d$ with compact support).  We shall record some basic facts regarding weak convergence of measures, see for instance Chapter 1 of \cite{Mat} for details.  The weak limit enjoys the following two semi-continuity properties:
\begin{enumerate}
\item $\mu(U) \leq \liminf_{k\rightarrow \infty}\mu_k(U)$ for every open set  $U\subset \R^d$, and
\item $\mu(K) \geq \limsup_{k\rightarrow \infty}\mu_k(K)$  for every compact set $K\subset \R^d$.
\end{enumerate}
The separability of $C_0(\R^d)$ along with the Riesz representation theorem, yields the following weak compactness result: If $\mu_k$ is a sequence of measures such that $\sup_k \mu_k(B(0,R))<\infty$ for every $R>0$, then the sequence has a weakly convergent subsequence.
\end{itemize}

\subsection{The lattice of triples of dyadic cubes}\label{dyadictriples}


Let $\mathcal{D}=\mathcal{D}(\mathcal{Q})$ denote the lattice of concentric triples of open dyadic cubes from a dyadic lattice $\mathcal{Q}$.  Cubes in the lattice $\mathcal{D}$ are not disjoint on a given level, but have finite overlap.

Set $Q_0= 3(0,1)^d = (-1,2)^d$.  For a cube $Q\in \dy$, we set $\mathcal{L}_Q$ to be the canonical linear map (a composition of a dilation and a translation) satisfying $\mathcal{L}_Q(Q_0)=Q$.  

The cubes in $\mathcal{D}=\mathcal{D}(\mathcal{Q})$ have a natural family tree:  For instance, a cube $P\in \mathcal{D}$ is the grandparent of $Q\in \mathcal{D}$ if $P=3\underline{P}$ and $Q=3\underline{Q}$ where $\underline{P}\in \mathcal{Q}$ is the unique dyadic cube containing $\underline{Q}\in \mathcal{Q}$ with $\ell(\underline{P})=4\ell(\underline{Q})$.

\begin{lem}\label{dycubecontain} Suppose that $Q\in \mathcal{D}$, and $P$ is any cube that intersects $Q$  with $\ell(P)\leq \ell(Q)$.  Then the grandparent $\wt{Q}$ of $Q$ contains $P$ (in fact, $\wt{Q}\supset 3Q$).\end{lem}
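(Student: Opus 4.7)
The plan is to reduce the inclusion $P \subset \widetilde{Q}$ to the two chained inclusions $P \subset 3Q$ and $3Q \subset \widetilde{Q}$, each of which follows from a short triangle inequality in the sup-norm on $\R^d$. This structure also makes the parenthetical strengthening $\widetilde{Q} \supset 3Q$ appear automatically.

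First I would show $P \subset 3Q$. Picking any $y \in P \cap Q$, the openness of $Q$ implies that $y$ lies strictly inside $Q$, i.e., at sup-distance less than $\ell(Q)/2$ from the center $c$ of $Q$. Any $x \in P$ has sup-distance at most $\ell(P) \leq \ell(Q)$ from $y$, so by the triangle inequality $x$ sits at sup-distance strictly less than $3\ell(Q)/2$ from $c$, placing it in the open cube $3Q$.

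Next I would show $3Q \subset \widetilde{Q}$ using the dyadic structure. Since $\underline{Q}$ is one of the $4^d$ dyadic subcubes of the parent $\underline{P}$ (which has side length $4\ell(\underline{Q})$), the sup-distance between the centers of $\underline{Q}$ and $\underline{P}$ is at most $3\ell(\underline{Q})/2 = \ell(Q)/2$. These are also the centers of $Q$ and $\widetilde{Q}$ respectively, so combining with the bound for membership in $3Q$, a second triangle inequality places any point of $3Q$ at sup-distance strictly less than $2\ell(Q) = \tfrac{1}{2}\ell(\widetilde{Q})$ from the center of $\widetilde{Q}$, hence inside $\widetilde{Q}$.

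The only subtlety is bookkeeping to preserve strictness so that the containments hold as open sets; this strictness is purchased entirely by the hypothesis that $Q$ is open and $y \in P \cap Q$ lies in its interior, so no openness assumption on $P$ itself is needed. I do not anticipate a real obstacle here — this lemma is a piece of combinatorial geometry setting up the dyadic machinery for the rest of the paper.
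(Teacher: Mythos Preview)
Your argument is correct and follows the same two-step route as the paper, namely $P\subset 3Q$ and $3Q\subset \widetilde{Q}$; the paper just compresses the second step into the one-line chain $\widetilde{Q}=3\widetilde{\underline{Q}}\supset 9\underline{Q}=3Q$, which is exactly your sup-norm triangle inequality in disguise. One cosmetic point: you denote the dyadic grandparent by $\underline{P}$, which clashes with the cube $P$ already in the statement---use $\widetilde{\underline{Q}}$ instead.
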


\begin{proof} For $Q=3\underline{Q}$ with $\underline{Q}\in \mathcal{Q}$, the cube $\wt{Q}$ is the triple of a cube $\wt{\underline{Q}}$ that contains $\underline{Q}$ and satisfies $\ell(\wt{\underline{Q}})=4\ell(\underline{Q})$.  Therefore $\wt{Q}= 3\wt{\underline{Q}}\supset 9\underline{Q}=3Q\supset P$, and the lemma is proved.\end{proof}

We say that a sequence of lattices $\mathcal{D}_k$ stabilizes in a lattice $\mathcal{D}'$ if every $Q'\in \mathcal{D}'$ lies in $\mathcal{D}_k$ for sufficiently large $k$.

\begin{lem} Suppose $\mathcal{D}^{(k)}$ is a sequence of lattices with $Q_0\in \mathcal{D}^{(k)}$ for all $k$.  Then there exists a subsequence of the lattices that stabilizes to some lattice $\mathcal{D}'$.
\end{lem}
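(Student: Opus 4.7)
The plan is to extract a subsequence along which the dyadic ancestors of the unit cube $(0,1)^d$ stabilize scale by scale, and then show that this forces the corresponding lattices of triples to stabilize as well. First I would observe that the hypothesis $Q_0 \in \dy^{(k)}$ is equivalent to $(0,1)^d$ lying in the underlying dyadic lattice $\mathcal{Q}^{(k)}$. This forces the scale-$1$ tiling of $\mathcal{Q}^{(k)}$ to be the standard one, $\mathbb{Z}^d+[0,1)^d$; by the nesting property of a dyadic lattice (each cube of side $2^{j+1}$ is the union of $2^d$ cubes of side $2^j$ from the lattice), the tilings of $\mathcal{Q}^{(k)}$ at all scales $2^j \leq 1$ are then the standard dyadic ones, independent of $k$. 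Thus all the freedom is concentrated at scales $2^j$ with $j \geq 1$.

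For $j \geq 1$, let $P_j^{(k)} \in \mathcal{Q}^{(k)}$ denote the unique cube of side $2^j$ containing $(0,1)^d$. There are at most $2^d$ possibilities for $P_1^{(k)}$, one for each quadrant in which $(0,1)^d$ can sit inside its parent; and given $P_j^{(k)}$, there are again at most $2^d$ possibilities for $P_{j+1}^{(k)}$. A pigeonhole argument at scale $2$ yields a subsequence on which $P_1^{(k)}$ is constant, equal to some $P_1'$; a further pigeonhole at scale $4$ produces a sub-subsequence on which $P_2^{(k)}=P_2'$; and so on. A standard Cantor diagonal extraction then provides a single subsequence $(k_n)$ with the property that $P_j^{(k_n)}=P_j'$ for every $j\geq 1$ and every $n\geq j$.

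Define $\mathcal{Q}'$ to be the dyadic lattice whose scale-$2^j$ tiling (for $j \geq 1$) consists of the $2^j \mathbb{Z}^d$-translates of $P_j'$ and whose cubes of side $\leq 1$ are the standard dyadic ones; the nesting of $\mathcal{Q}'$ follows from the relation $P_j'\subset P_{j+1}'$, which is inherited from the inclusions $P_j^{(k_n)}\subset P_{j+1}^{(k_n)}$ for large $n$. Set $\dy'=\dy(\mathcal{Q}')$. For any $Q' \in \dy'$, the underlying cube $\tfrac{1}{3}Q'$ is contained in $P_j'$ for some $j$, and for all $n\geq j$ the equalities $P_j^{(k_n)}=P_j'$ together with the forced tiling and nesting conditions at smaller scales give $\tfrac{1}{3}Q' \in \mathcal{Q}^{(k_n)}$, so $Q' \in \dy^{(k_n)}$, as required. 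The only step that requires any care is the finiteness of the choice set at each scale---this uses that, once a dyadic lattice is specified up through scale $2^j$, only $2^d$ dyadic extensions to scale $2^{j+1}$ are possible---and the rest is a routine diagonal extraction.
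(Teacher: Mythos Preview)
Your argument is correct and is precisely the diagonal argument the paper has in mind; the paper's proof is the one-sentence sketch ``For every $n\geq0$, there are $2^{nd}$ ways to choose a dyadic cube of sidelength $2^n$ so that $(0,1)^d$ is one of its dyadic descendants,'' and you have simply written this out in full, stabilizing the ancestors $P_j^{(k)}$ of $(0,1)^d$ one scale at a time and extracting diagonally.
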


The lemma is proved via a diagonal argument:  For every $n\geq0$, there are $2^{nd}$ ways to choose a dyadic cube of sidelength $2^n$ so that $(0,1)^d$ is one of its dyadic descendants.\\

Finally, we remark that there is a constant $C>0$ such that for any lattice $\mathcal{D}$ and measure $\mu$,
$$\int_{0}^{\infty}\Bigl(\frac{\mu(B(x,r))}{r^s}\Bigl)^2\frac{dr}{r}\leq C\sum_{Q\in \mathcal{D}}D_{\mu}(Q)^2\chi_Q(x) \text{ for every }x\in \R^d,
$$
and therefore, by integrating both sides of this inequality with respect to $\mu$, we see that
$$\mathcal{W}_2(\mu, \R^d)\leq C\sum_{Q\in \mathcal{D}}D_{\mu}(Q)^2\mu(Q).
$$

\subsection{The growth condition}\label{growth}  Fix any $s\in (0,d)$ (integer or not).  If $\mu$ is a finite non-atomic measure for which all CZO's are bounded in $L^2(\mu)$, then necessarily $\sup_{Q\in \mathcal{D}}D_{\mu}(Q)<\infty$ for any lattice $\mathcal{D}$.  This is even true if one only considers the boundedness of certain non-degenerate CZOs (for instance the Riesz transform).  For a simple proof see Proposition 1.4 in Chapter 3 of David \cite{Dav}.

\section{From a CZO to a square function}\label{squaresec}

We first follow a rather standard path, already used by David-Semmes in \cite{DS}, to introduce a square function.

Suppose that $\mu$ is a finite measure for which all CZOs are bounded in $L^2(\mu)$.

For $M>0$, pick any odd function $\varphi\in \Lip_0(B(0,M))$ with $\|\varphi\|_{\Lip}\leq 1$.  Set $(\eps_n)_{n\in \mathbb{Z}}$ to be a sequence of independent mean zero $\pm1$-valued random variables (defined on some probability space $\Omega$).  Then, for any $n_0\in \mathbb{N}$ and $\omega\in \Omega$, notice that the odd function
$$K(x)=\sum_{n\in \mathbb{Z},\,|n|\leq n_0}\eps_n(\omega)\frac{1}{3^s\cdot 2^{ns}}\varphi\bigl(\frac{x}{2^n}\bigl)
$$
satisfies $$|K(x)|\leq \frac{C(M)}{|x|^s}\text{ and }|\nabla K(x)|\leq \frac{C(M)}{|x|^{s+1}}\text{ for }x\neq 0,$$  (the factor of $3^s$ is an artifact of using the lattice of triples).  Therefore, for some constant $C(M)$ (which we reiterate does not depend on $\varphi$ or $n_0$), we have that
$$\int_{\R^d}\Bigl|\sum_{|n|\leq n_0} \eps_n(\omega)\int_{\R^d} \frac{1}{3^s\cdot 2^{ns}}\varphi\bigl(\frac{x-y}{ 2^n}\bigl)f(y)d\mu(y)\Bigl|^2d\mu(x)\leq C(M)\|f\|^2_{L^2(\mu)},
$$
for every $f\in L^2(\mu)$. Taking the expectation over $\omega\in\Omega$, and using independence, we deduce that
$$\sum_{|n|\leq n_0} \|T_{\varphi,3\cdot 2^n}(f\mu)\|_{L^2(\mu)}^2\leq C(M)\|f\|^2_{L^2(\mu)},$$
where $$T_{\varphi, \ell}(f\mu)(x) = \int_{\R^d}\frac{1}{\ell^{s}}\varphi\Bigl(\frac{3(x-y)}{\ell}\Bigl)f(y)d\mu(y).$$
Now we may let $n_0\to \infty$ to conclude that
\begin{equation}\label{squarebd}\sum_{n\in \mathbb{Z},\; \ell=3\cdot 2^n} \|T_{\varphi,\ell}(f\mu)\|_{L^2(\mu)}^2\leq C(M)\|f\|^2_{L^2(\mu)} \text{ for every }f\in L^2(\mu).
\end{equation}

\section{The Riesz System}\label{rieszsec}

Suppose that $\mu$ is a finite measure for which all CZOs are bounded in $L^2(\mu)$ with operator norms at most $1$.  For any cube $Q$, it is trivial to observe that all CZOs are bounded in $L^2(\chi_Q\mu)$, and that the operator norms can only decrease.  Consequently, making reference to Section \ref{dyadictriples}, we conclude that in order to prove Theorem \ref{thm1}, it suffices to find a constant $C>0$ so that
$$\sum_{Q\in \dy} D_{\mu}(Q)^2\mu(Q)\leq C\mu(\R^d).
$$

For $A\gg 1$, and a cube $Q\in \mathcal{D}$, define the set
$$\Psi_{\mu}^A(Q) = \Bigl\{\psi\in \Lip_0(AQ): \;\|\psi\|_{\Lip}\leq \frac{1}{\ell(Q)}\text{ and }\int_{\R^d}\psi \,d\mu=0\Bigl\}.
$$
We note that the collection $\{\Psi_{\mu}^A(Q)\}_{Q\in \mathcal{D}}$ is a Riesz system in the sense that there is a constant $C(A)>0$ such that for any choices of functions $\psi_Q\in \Psi_{\mu}^A(Q)$, it holds that
\begin{equation}\label{Rieszbd}\sum_{Q\in \mathcal{D}}\frac{|\langle g, \psi_Q\rangle|^2}{\mu(3AQ)}\leq C(A)\|g\|_{L^2(\mu)}^2 \text{ for every }g\in L^2(\mu).\end{equation}
(See Appendix B of \cite{JN2} for the simple proof).

Combining this with (\ref{squarebd}) we see that for every odd function $\varphi\in \Lip_0(B(0,M))$ with $\|\varphi\|_{\Lip}\leq 1$, and for every $A>1$, there is a constant $C(M, A)$ such that for any choices of $\psi_Q\in \Phi_{\mu}^A(Q)$,
\begin{equation}\begin{split}\nonumber
\sum\limits_{Q\in \mathcal{D}}&\frac{|\langle T_{\varphi,\ell(Q)}(\mu),\psi_Q\rangle_{\mu}|^2}{\mu(3AQ)}\leq \sum\limits_{n\in \mathbb{Z},\,\ell=3\cdot 2^n}\sum_{Q\in \mathcal{D}}\frac{|\langle T_{\varphi,\ell}(\mu),\psi_Q\rangle_{\mu}|^2}{\mu(3AQ)}\\&\leq C(A)\sum\limits_{n\in \mathbb{Z},\,\ell=3\cdot 2^n}\| T_{\varphi,\ell}(\mu)\|_{L^2(\mu)}^2\leq C(M,A)\mu(\R^d).
\end{split}\end{equation}

Let us define the \emph{Lipschitz oscillation coefficient}
$$\Theta_{\mu,\varphi}^A(Q) = \sup\limits_{\psi\in \Psi_{\mu}^A(Q)}|\langle T_{\varphi,\ell(Q)}(\mu),\psi\rangle_{\mu}|.$$
Then we infer that
\begin{equation}\label{plugRiesz}
\sum\limits_{Q\in \mathcal{D}}\frac{\Theta_{\mu,\varphi}^A(Q)^2}{\mu(3AQ)}\leq C(M,A)\mu(\R^d).
\end{equation}

Now, let us take a countable dense (in the uniform metric) subset $(\varphi_j)_{j\in \mathbb{N}}$ of the separable space of odd functions in $\varphi\in\Lip_0(\R^d)$ with $\|\varphi\|_{\Lip}\leq 1$, arranged so that $\varphi_j\in \Lip_0(B(0,j))$.

We notice that \emph{if} (and it is a big if, as it is false) we could find some number $\varphi_1,\dots, \varphi_N$ of the functions, along with
 universal constants $\Delta>0$ and $A>1$ such that for every $Q\in \mathcal{D}$,
 \begin{equation}\label{nondegenerate}\max_{j\in \{1,\dots, N\}}\Theta_{\mu,\varphi_j}^A(Q)\geq \Delta D_{\mu}(Q)\mu(Q),
 \end{equation}
 then it would immediately follow from (\ref{plugRiesz}) that
 $$\sum_{Q\in \mathcal{D}}D_{\mu}(Q)^2 \frac{\mu(Q)}{\mu(3AQ)}\mu(Q)\leq \frac{C(N, A)}{\Delta^2}\mu(\R^d),
 $$
 which is essentially what we want to prove.  However, as we have already indicated, this is too good to be true, and there are in general cubes for which (\ref{nondegenerate}) fails.  For instance, if $\mu=\chi_{B(0,M)}m_d$, then the left hand side of (\ref{nondegenerate}) equals zero if $AQ\subset B(0, \tfrac{M}{2})$ and $N\ell(Q)\leq \tfrac{M}{2}$.

 We therefore modify our goal to the following rather more complicated (but achievable) statement:

 \begin{goal} Find absolute constants $N\in \mathbb{N}$, $A>1$, $\Delta>0$ and $c>0$, and a rule $\mathcal{F}$ that associates with each finite measure $\mu$ a set of cubes $\mathcal{F}(\mu)\subset \mathcal{D}$, such that the following two conditions hold.

 (\textbf{A})  (Large Lipschitz Oscillation Coefficient)  For each $Q\in \mathcal{F}(\mu)$
 $$\max_{j\in \{1,\dots,N\}}\Theta_{\mu, \varphi_j}^A(Q)\geq \Delta D_{\mu}(Q)\mu(Q).
 $$

 (\textbf{B}) (Large Portion of Wolff Potential)  If $\sup_{Q\in \mathcal{D}}D_{\mu}(Q)<\infty$, then
 $$\sum_{Q\in \mathcal{F}(\mu)}D_{\mu}(Q)^2 \frac{\mu(Q)}{\mu(3AQ)}\mu(Q)\geq c\sum_{Q\in \mathcal{D}}D_{\mu}(Q)^2\mu(Q).
 $$
  \end{goal}

 Once this goal is achieved, the main result would follow.  Indeed, if $\mu$ is a finite measure for which all CZO's are bounded in $L^2(\mu)$, then necessarily $\sup_{Q\in \mathcal{D}}D_{\mu}(Q)<\infty$ (Section \ref{growth}).  From property (\textbf{A}) and (\ref{plugRiesz}) we see that
 $$\sum_{Q\in \mathcal{F}(\mu)}D_{\mu}(Q)^2 \frac{\mu(Q)}{\mu(3AQ)}\mu(Q)\leq C(A,\Delta,N)\mu(\R^d),
 $$
and therefore the desired bound follows from property (\textbf{B}).

 We shall follow \cite{JNRT} in making the choice of the rule.  This calls for two refinement processes on the lattice $\mathcal{D}$.

\section{Upward Domination}

Fix $\eps>0$.  Fix a measure $\mu$.

\begin{defn}    We say that $Q'\in \dy$  \emph{dominates}  $Q\in \dy$  \emph{from above} if $Q'\supset Q$ and
$$D_{\mu}(Q')\geq 2^{\eps  [Q':Q]}D_{\mu}(Q).
$$
The set of those cubes in $\mathcal{D}$ that cannot be dominated from above by another cube in $\mathcal{D}$ is denoted by $\dysel(\mu)$ (or just $\dysel$).
\end{defn}

Notice that for any $M>1$, a cube $Q\in \dysel(\mu)$ is $M$-doubling in the sense that $\mu(MQ)\leq CM^{s+\eps}\mu(Q)$.  Indeed, as a consequence of Lemma \ref{dycubecontain} above, we may cover $MQ$ by a cube $Q'\in\dy$ that contains $Q$ and has side-length comparable to $M\ell(Q)$.  But then $\mu(MQ)\leq \mu(Q')\leq \bigl(\frac{\ell(Q')}{\ell(Q)}\bigl)^s2^{\eps[Q:Q']}\mu(Q)\leq CM^{s+\eps}\mu(Q)$.

\begin{lem}\label{upwolff}  Suppose that  $\sup_{Q\in \mathcal{D}}D_{\mu}(Q)<\infty$.  Then there exists a constant $c(\eps )>0$ such that
$$\sum_{Q\in \dysel(\mu)}D_{\mu}(Q)^2\mu(Q) \geq c(\eps) \sum_{Q\in \dy}D_{\mu}(Q)^2\mu(Q).
$$
\end{lem}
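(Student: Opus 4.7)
My plan is to construct a selection map $\pi$ from $\{Q \in \dy : \mu(Q) > 0\}$ to $\dysel(\mu)$ sending each $Q$ to a cube of maximum side-length among those dominating $Q$ from above (counting $Q$ itself as a trivial dominator), and then to control each fibre sum $\sum_{Q \in \pi^{-1}(Q^*)} D_\mu(Q)^2 \mu(Q)$ by a constant times $D_\mu(Q^*)^2 \mu(Q^*)$. Cubes with $\mu(Q) = 0$ contribute $0$ to both sides of the desired inequality and may be ignored throughout.

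To see that $\pi$ is well defined I would use the hypothesis on $\sup_P D_\mu(P)$: any strict dominator $Q'$ of $Q$ satisfies $2^{\eps[Q':Q]} \leq D_\mu(Q')/D_\mu(Q) \leq \sup_{P \in \dy} D_\mu(P)/D_\mu(Q)$, which bounds $[Q':Q]$ from above. Since for each fixed level only finitely many cubes of $\dy$ contain $Q$, the maximum side-length of a dominator is attained.

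Next I would show $\pi(Q) \in \dysel(\mu)$ using the transitivity of domination. If $Q'' \supset Q' \supset Q$ with each pair being a domination, then containment totally orders side-lengths, so $[Q'':Q] = [Q'':Q'] + [Q':Q]$ and hence
$$D_\mu(Q'') \geq 2^{\eps [Q'':Q']} D_\mu(Q') \geq 2^{\eps [Q'':Q]} D_\mu(Q).$$
Any strict dominator of $\pi(Q)$ would therefore also dominate $Q$, contradicting the maximality of $\pi(Q)$.

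For the fibre bound, each $Q \in \pi^{-1}(Q^*)$ satisfies $Q \subset Q^*$ and $D_\mu(Q) \leq 2^{-\eps [Q^*:Q]} D_\mu(Q^*)$. Grouping the fibre by the level $k = [Q^*:Q] \geq 0$ and using the finite overlap of $\dy$ at each level (multiplicity at most $3^d$, since cubes on a level are triples of disjoint dyadic cubes), the inner $\mu$-sum is at most $3^d \mu(Q^*)$, and summing the geometric series $\sum_{k \geq 0} 2^{-2\eps k}$ yields
$$\sum_{Q \in \pi^{-1}(Q^*)} D_\mu(Q)^2 \mu(Q) \leq C(\eps) D_\mu(Q^*)^2 \mu(Q^*).$$
Summing over $Q^* \in \dysel(\mu)$ completes the proof. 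The main obstacle is establishing the transitivity step to verify $\pi(Q) \in \dysel(\mu)$; the remaining steps are a routine geometric summation, and the hypothesis $\sup_P D_\mu(P) < \infty$ enters only to guarantee that $\pi$ is well defined.
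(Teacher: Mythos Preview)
Your proof is correct and follows essentially the same approach as the paper: the paper likewise assigns to each $Q\in\dy\setminus\dysel$ with $\mu(Q)>0$ a largest-side-length dominator $\wt Q\in\dysel$ (using the same finiteness argument from $\sup_P D_\mu(P)<\infty$ and the same transitivity observation), and then bounds $\sum_{\wt Q=P}D_\mu(Q)^2\mu(Q)$ by grouping by level and summing the geometric series exactly as you do. Your organization via the map $\pi$ and its fibres is just a repackaging of the paper's argument.
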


\begin{proof}

We first claim that every $Q\in \dy\backslash \dysel$ with $\mu(Q)>0$ can be dominated from above by a cube $\wt{Q}\in \dysel$.

Indeed, note that if $Q'$ dominates $Q$ from above, then certainly
$$[Q':Q]\leq\frac{1}{\eps}\log_2\Bigl(\frac{\sup_{Q''\in \mathcal{D}}D_{\mu}(Q'')}{D_{\mu}(Q)}\Bigl),$$ or else we would have that $D_{\mu}(Q')>\sup_{Q''\in \mathcal{D}}D_{\mu}(Q'')$ (which is absurd).  Consequently, there are only finitely many candidates for a cube that dominates $Q$ from above.  To complete the proof of the claim, choose $\wt{Q}\in \dy$ to be a cube of largest side-length that dominates $Q$ from above.  Then $\wt{Q}\in \dysel$ (domination from above is transitive).

For each fixed $P\in \dysel$, consider those $Q\in \dy\backslash \dysel$ with $\mu(Q)>0$ and $\wt{Q}=P$.  Then
\begin{equation}\begin{split}\nonumber\sum_{Q\in \dy\backslash \dysel: \, \widetilde{Q}=P}& D_{\mu}(Q)^2\mu(Q) = \sum_{m\geq 1} \sum_{\substack{Q\in \dy\backslash \dysel:\\\ell(Q)=2^{-m}\ell(P),\, \widetilde{Q}=P}} D_{\mu}(Q)^2\mu(Q)\\
&\leq \sum_{m\geq 1}2^{-2\eps  m}D_{\mu}(P)^2\Bigl[\sum_{\substack{Q\in \dy:\\\ell(Q)=2^{-m}\ell(P), Q\subset P}} \mu(Q)\Bigl]
\end{split}\end{equation}
The sum in square brackets is bounded by $C\mu(P)$, and so by summing over $P\in \dysel$, we see that
\begin{equation}\nonumber \sum_{Q\in \dy\backslash \dysel}D_{\mu}(Q)^2\mu(Q)\leq C(\eps)\sum_{P\in\dysel}D_{\mu}(P)^2\mu(P),\end{equation}
and the lemma is proved.
\end{proof}

\section{Downward Domination and the choice of the rule}

\begin{defn}  We say that $Q\in \dysel(\mu)$ \textit{is dominated from below by a (finite) bunch of cubes} $Q_j$ if the following conditions hold:
\begin{enumerate}
\item  $Q_j\in \dysel$,
\item $D_{\mu}(Q_j)\geq 2^{\eps  [Q:Q_j]}D_{\mu}(Q)$,
\item $3Q_j$ are disjoint,
\item $3Q_j\subset 3Q$,
\item $\displaystyle\sum_j D_{\mu}(Q_j)^22^{-2\eps [Q:Q_j]}\mu(Q_j) \geq D_{\mu}(Q)^2 \mu(Q).$
\end{enumerate}
We set $\dyselA=\dyselA(\mu)$ to be the set of all cubes $Q$ in $\dysel$ that cannot be dominated from below by a bunch of cubes except for the trivial bunch consisting of just the cube $Q$.
\end{defn}

\begin{lem}\label{downwolff}  Suppose that $\sup_{Q\in \mathcal{D}}D_{\mu}(Q)<\infty$.  There exists $c(\eps)>0$  such that
$$\sum_{Q\in \dyselA}D_{\mu}(Q)^2\mu(Q) \geq c(\eps) \sum_{Q\in \dysel}D_{\mu}(Q)^2\mu(Q).
$$
\end{lem}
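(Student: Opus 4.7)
The plan is to iteratively replace each cube $Q\in \dysel\setminus\dyselA$ by a collection of $\dyselA$-cubes dominating $Q$ from below, and then absorb the resulting multiplicity via the geometric decay built into condition (2) of downward domination. For each $Q_0\in \dysel$ with $\mu(Q_0)>0$, I would build a finite tree rooted at $Q_0$ as follows: if $Q_0\in \dyselA$, the tree is the singleton $\{Q_0\}$; otherwise, pick any non-trivial bunch $\{Q_j\}$ dominating $Q_0$ from below, declare the $Q_j$ the children of $Q_0$ in the tree, and recurse on each $Q_j\in \dysel$. In a non-trivial bunch the cubes $Q_j$ satisfy $3Q_j\subsetneq 3Q_0$ (the only way to have a cube $Q_j\ne Q_0$ with $3Q_j\subset 3Q_0$), and hence $\ell(Q_j)\leq \ell(Q_0)/2$, so condition (2) gives $D_\mu(Q_j)\geq 2^{\eps}D_\mu(Q_0)$. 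After $k$ generations every surviving cube has density at least $2^{\eps k}D_\mu(Q_0)$, so the hypothesis $\sup_{Q\in \dy}D_\mu(Q)<\infty$ caps the tree's depth; together with the finiteness of each bunch, the full tree is finite. Denote its set of leaves by $\mathcal{F}(Q_0)\subset \dyselA$.

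The next step is a routine induction on the depth of the tree: applying condition (5) at every internal node and using $[Q_0:Q_j]+[Q_j:L]=[Q_0:L]$ to combine the weights multiplicatively along branches yields
$$\sum_{L\in \mathcal{F}(Q_0)}D_\mu(L)^2\,2^{-2\eps [Q_0:L]}\mu(L)\;\geq\;D_\mu(Q_0)^2\mu(Q_0).$$
Summing this over $Q_0\in \dysel$ and swapping the order of summation, the lemma will reduce to the multiplicity estimate
$$\sum_{Q_0\in \dysel:\,L\in\mathcal{F}(Q_0)}2^{-2\eps[Q_0:L]}\;\leq\; C(\eps) \qquad \text{for every }L\in \dyselA.$$
Iterating condition (4) along the path from $Q_0$ down to $L$ forces $3L\subset 3Q_0$, and for each scale $\ell(Q_0)=2^k\ell(L)$ with $k\geq 1$ the bounded overlap of the triples lattice $\dy$ admits only $O(1)$ candidate cubes $Q_0$. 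A geometric series in $k$ then delivers the bound, and combining everything produces the inequality of the lemma with $c(\eps)\sim 1/C(\eps)$.

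The only delicate point in this scheme is the finiteness of the iterative construction, which relies crucially on the hypothesis $\sup_{Q\in \dy}D_\mu(Q)<\infty$ (available from Section~\ref{growth}) together with the geometric density growth built into condition~(2); without either one there is no mechanism to terminate the descent. Once finiteness of the tree is secured, the induction on condition~(5) and the standard bounded-overlap count in $\dy$ are entirely routine, and the architecture of the argument mirrors that used for upward domination in Lemma~\ref{upwolff}.
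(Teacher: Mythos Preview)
Your argument is correct and follows essentially the same route as the paper: both replace each $Q_0\in\dysel$ by a dominating collection of $\dyselA$-cubes (you via an explicit finite tree and induction on its depth, the paper via a transitivity-plus-minimal-bunch argument), then swap the order of summation and control the multiplicity by the geometric series $\sum_{Q_0:\,3Q_0\supset 3L}2^{-2\eps[Q_0:L]}\leq C(\eps)$. The only cosmetic difference is that the paper packages the iterated use of condition~(5) as the statement that the leaves themselves constitute a single dominating bunch for $Q_0$, whereas you keep the tree structure visible and verify the inequality directly.
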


\begin{proof}We start with a simple claim.

\textbf{Claim.}   Every $Q\in \dysel$ with $\mu(Q)>0$ is dominated from below by a bunch of cubes $P_{Q,j}$ in $\dyselA(\mu)$.

To prove the claim we make two observations. The first is transitivity: if the bunch $Q_1, \dots, Q_N$ dominates $Q'\in \dysel$ from below, and if (say) $Q_1$ is itself dominated from below by a bunch $P_1, \dots, P_{N'}$, then the bunch $P_1, \dots, P_{N'}$, $ Q_2, \dots, Q_N$ dominates $Q'$.  The second observation is that there are only finitely many cubes $Q'$ that can participate in a dominating bunch for $Q$:  Indeed, each such cube $Q'$ satisfies $D_{\mu}(Q')\geq 2^{\eps  [Q:Q']}D_{\mu}(Q)$, and so
$$[Q:Q']\leq\frac{1}{\eps}\log_2\Bigl(\frac{\sup_{Q''\in \mathcal{D}}D_{\mu}(Q'')}{D_{\mu}(Q)}\Bigl).$$

With these two observations in hand, we define a partial ordering on the finite bunches of cubes $(Q_j)_j$ that dominate $Q$ from below:  For two different dominating bunches $(Q^{(1)}_j)_j$ and $(Q^{(2)}_j)_j$, we say that $(Q^{(1)}_j)_j \prec (Q^{(2)}_j)_j$ if for each cube $3Q^{(1)}_j$, we have $3Q^{(1)}_j\subset 3Q^{(2)}_k$ for some $k$.  Since there are only finitely many cubes that can participate in a dominating bunch, there may be only finitely many different dominating bunches of $Q$, and hence there is a minimal (according to the partial order $\prec$) dominating bunch $(P_{Q,j})_j$.  Each cube $P_{Q,j}$ must lie in $\dyselA(\mu)$.



Now write
\begin{equation}\begin{split}\nonumber\sum_{Q\in \dysel}& D_{\mu}(Q)^2\mu(Q) \leq \sum_{Q\in \dysel}\sum_j D_{\mu}(P_{Q,j})^2\mu(P_{Q,j})2^{-2\eps [Q:P_{Q,j}]}\\
& \leq \sum_{P\in \dyselA}D_{\mu}(P)^2\mu(P)\Bigl[\sum_{Q: 3Q\supset 3P}2^{-2\eps [Q:P]}\Bigl].
\end{split}\end{equation}
The inner sum does not exceed $\tfrac{C}{\eps }$, and the lemma follows.\end{proof}

We may now define the rule $\mathcal{F}$.  For a measure $\mu$, set $\mathcal{F}(\mu) = \dyselA(\mu)$.  Then, for any $A>1$, the $3A$-doubling property of cubes in $\dysel$, along with Lemmas \ref{upwolff} and \ref{downwolff}, yield that property (\textbf{B}) holds, and the constant $c>0$ appearing in property (\textbf{B}) can be given in terms of $A$ and $\eps$.

It therefore remains to show that, for $A>1$ sufficiently large and $\eps>0$ sufficiently small, we can find $\Delta>0$ and $N\in \mathbb{N}$ so that property (\textbf{A}) holds.  We shall achieve this via a contradiction.

\section{The blow-up to a smoothly reflectionless measure}

Let us now fix $\eps>0$ so that the interval $(s-\eps,s+\eps)$ contains no integers.  We shall suppose that property $(\textbf{A})$ fails to hold for our rule $\mathcal{F}$.  That is, for every $k\in \mathbb{N}$, we can find a measure $\wt\mu_k$ and a cube $Q_k\in \dyselA(\wt\mu_k)$ for which
 \begin{equation}\nonumber\max_{j\in \{1,\dots, k\}}\Theta_{\wt\mu_k,\varphi_j}^k(Q_k)\leq \frac{1}{k} D_{\wt\mu_k}(Q_k)\wt\mu_k(Q_k).
 \end{equation}
We set $\mu_k = \frac{\wt\mu_k(\mathcal{L}_{Q_k}(\,\cdot\,))}{\wt\mu_k(Q_k)}$.  Notice that $\dy^{(k)} = \mathcal{L}^{-1}_{Q_k}\dy$ is a lattice containing $Q_0$.  Moreover, $Q_0 \in \dyselA^{(k)}(\mu_k)$, and
 \begin{equation}\label{degenerate}\max_{j\in \{1,\dots, k\}}|\langle T_{\varphi_j}(\mu_k),\psi\rangle_{\mu_k}|\leq \frac{1}{k} \text{ for every }\psi\in \Psi_{\mu_k}^k(Q_0),
 \end{equation}
where $T_{\varphi_j}(\mu_k)(x) = \int_{\R^d}\varphi_j(x-y)d\mu_k(y)$.

Insofar as $Q_0\in \dysel^{(k)}(\mu_k)$,  there is a constant $C>0$ such that for every $R>1$,
\begin{equation}\label{prelimgrowth}\mu_k(B(0,R))\leq CR^{s+\eps}.\end{equation}  Consequently, we may pass to a subsequence of the measures $\mu_k$ that converges weakly to a measure $\mu$.   Then $\mu(\overline{Q}_0)\geq 1$, and
\begin{equation}\label{limitedgrowth} \mu(B(0,R))\leq CR^{s+\eps}\text{ for every }R>1.\end{equation}

By passing to a further subsequence if necessary, we may assume that the lattices $\mathcal{D}^{(k)}$, which all contain $Q_0$, stabilize in a lattice $\mathcal{D}'$.

We next claim that, for each $j\in \mathbb{N}$,
\begin{equation}\label{limitzerodense}\langle T_{\varphi_j}(\mu),\psi\rangle_{\mu}=0
\end{equation}
for every $\psi\in \Lip_0(\R^d)$  with $\int_{\R^d}\psi d\mu=0$.

To see this, we first remark that, as $\mu_k$ converge to $\mu$ weakly, we have that $\mu_k\times\mu_k$ converge to $\mu\times\mu$ weakly over $C_0(\R^d\times\R^d)$. (For instance, one can show that finite linear combinations of functions of the form $(x,y)\mapsto f(x)g(y)$, with $f,g\in C_0(\R^d)$, are dense in $C_0(\R^d\times\R^d)$.)  Therefore $$\lim_{k\to\infty}\langle T_{\varphi_j}(\mu_k), \psi\rangle_{\mu_k}= \langle T_{\varphi_j}(\mu),\psi\rangle_{\mu},$$
for every $j\in \mathbb{N}$ and $\psi\in \Lip_0(\R^d)$ with $\int_{\R^d}\psi d\mu=0$.  Now, further assume that $\|\psi\|_{\Lip}<1$, and set $\psi_k = \psi - c_k\psi_0$, where $$c_k =\frac{\int_{\R^d}\psi d\mu_k}{\int_{\R^d}\psi_0 d\mu_k}\psi_0$$ for some function $\psi_0\in \Lip_0(\R^d)$ with $\int_{\R^d}\psi_0 d\mu=1$.  Then $c_k\rightarrow 0$ as $k\rightarrow \infty$, and so $\psi_k\in \Psi_{\mu_k}^k(Q_0)$ for sufficiently large $k$.  For those $k$, we get from (\ref{degenerate}) that $|\langle T_{\varphi_j}(\mu_k), \psi_k\rangle_{\mu_k}|\leq \frac{1}{k}$.  However, the uniformly restricted growth at infinity of the measures $\mu_k$, the property (\ref{prelimgrowth}), ensures that
$$\sup_k|\langle T_{\varphi_j}(\mu_k),\psi_0\rangle_{\mu_k}|\leq C(\varphi_j, \psi_0),
$$
so by writing $\langle T_{\varphi_j}(\mu_k), \psi_k\rangle_{\mu_k} = \langle T_{\varphi_j}(\mu_k), \psi\rangle_{\mu_k} - c_k\langle T_{\varphi_j}(\mu_k), \psi_0\rangle_{\mu_k}$, we infer that (\ref{limitzerodense}) holds for every $j$ and $\psi\in \Lip_0(\R^d)$ with $\int_{\R^d}\psi d\mu=0$ under the additional assumption that $\|\psi\|_{\Lip}<1$.  But this additional assumption can clearly be removed by considering $\tfrac{\psi}{\|\psi\|_{\Lip}+1}$ instead of $\psi$.

From the density of the sequence $(\varphi_j)_j$ in the collection of odd functions in the space $\Lip_0(\R^d)$ with Lipschitz norm at most $1$, we see that for each odd function $\varphi\in \Lip_0(\R^d)$, there exists $\Lambda_{\varphi}\in \R$ such that
$$T_{\varphi}(\mu) = \Lambda_{\varphi} \text{ on }\supp(\mu).
$$
We call such a measure \emph{smoothly reflectionless}.\\

To complete the proof of the property (\textbf{A}), and thereby conclude the proof of Theorem \ref{thm1}, it suffices to show that this limit measure $\mu$ cannot exist.  The properties that we have so far deduced about $\mu$ are

$\bullet$ $\mu(\overline{Q}_0)\geq 1$,

$\bullet$ $\mu(B(0,R))\leq R^{s+\eps}$  for every $R>1$, and

$\bullet$  $\mu$ is smoothly reflectionless.

However, there is no contradiction within these three properties:  we could have that $\mu = C\mathcal{H}^{\lfloor s\rfloor}|L$ for some $\lfloor s\rfloor$-plane $L$ and $C>0$.  On the other hand, we have not yet used the condition of the impossibility to dominate by a bunch from below.  We shall use the fact that $Q_0$ lies in $\dyselA^{(k)}(\mu_k)$ for every $k$ to prove an additional property of the limit measure $\mu$ which in particular ensures that it cannot be supported on (a countable collection of) $\lfloor s\rfloor$-dimensional planes.

\section{The weak density property of the limit measure}

We continue to work with the limit measure $\mu$ constructed in the previous section. For $T\gg 1$ consider the set
$$E_T = \Bigl\{x\in 2Q_0: \overline{D}_{\mu,\eps}(x)>T\Bigl\},
$$
where $\overline{D}_{\mu, \eps}(x) = \sup_{Q'\in\mathcal{D}':\,x\in Q'}D_{\mu}(Q')2^{-\eps[Q':Q_0]}$.

\begin{lem}\label{weakdens} There is a constant $C>0$ such that for all $T$ large enough,
$$\mu(E_T)\leq \frac{C}{T^2}.
$$
\end{lem}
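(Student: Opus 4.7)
The plan is to exploit the property $Q_0\in\dyselA^{(k)}(\mu_k)$: at the pre-limit level no non-trivial bunch of $\dysel^{(k)}$-cubes can dominate $Q_0$ from below in $\mu_k$. If $\mu(E_T)$ were larger than some $C/T^2$, we would manufacture such a forbidden bunch whose weighted mass sum exceeds the admissible bound, obtaining a contradiction via weak convergence.

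For each $x\in E_T$ pick a witness $Q_x\in\mathcal{D}'$ containing $x$ with $D_\mu(Q_x)2^{-\eps[Q_x:Q_0]}>T$. The growth bound (\ref{limitedgrowth}) gives $D_\mu(Q)2^{-\eps[Q:Q_0]}\leq C$ for every $Q\in\mathcal{D}'$ of side-length at least $\ell(Q_0)$, so once $T$ exceeds a universal constant every witness must satisfy $\ell(Q_x)<\ell(Q_0)$. After discarding $E_T\cap\bigcup\{Q\in\mathcal{D}':\ell(Q)\in[\tfrac{\ell(Q_0)}{10\sqrt d},\ell(Q_0)],\,Q\cap 2Q_0\neq\emptyset\}$ (a finite union of bounded total mass, absorbable in the final constant), we may further arrange $3Q_x\subset 3Q_0$ and $Q_x\neq Q_0$. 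Replacing $Q_x$ by its upward dominator $\widetilde Q_x$ in $\dysel(\mu)$ only enlarges $D(\cdot)2^{-\eps[\cdot:Q_0]}$ (a short comparison of the exponents $[\widetilde Q_x{:}Q_x]$ and $[\widetilde Q_x{:}Q_0]$, using $\widetilde Q_x\supset Q_x$ and $\ell(\widetilde Q_x)\leq\ell(Q_0)$), and the same growth argument caps $\ell(\widetilde Q_x)$ strictly below $\ell(Q_0)$, so $\widetilde Q_x\neq Q_0$.

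Applying Vitali to the collection $\{3\widetilde Q_x\}_{x\in E_T}$ yields a finite subfamily $\{Q_i\}\subset\dysel(\mu)$ with $\{3Q_i\}$ pairwise disjoint inside $3Q_0$, and with $\{9Q_i\}$ covering the remainder of $E_T$. The $9$-doubling property of $\dysel(\mu)$-cubes (Section \ref{growth} / after the definition of $\dysel$) then delivers $\sum_i\mu(Q_i)\geq c\mu(E_T)$, whence
$$\sum_i D_\mu(Q_i)^2\,2^{-2\eps[Q_0:Q_i]}\mu(Q_i)>cT^2\mu(E_T).$$
For $k$ large each (fixed) $Q_i$ lies in $\mathcal{D}^{(k)}$; after slight perturbations of side-lengths to eliminate any $\mu$-mass on their boundaries, the weak convergence $\mu_k\rightharpoonup\mu$ gives $D_{\mu_k}(Q_i)\to D_\mu(Q_i)$ and $\mu_k(Q_i)\to\mu(Q_i)$ (and likewise for $Q_0$). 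A final $\dysel^{(k)}(\mu_k)$-upward replacement turns $\{Q_i\}$ into a non-trivial bunch for $Q_0$ in $\mu_k$; since $Q_0\in\dyselA^{(k)}(\mu_k)$, condition $(5)$ of downward domination must fail:
$$\sum_i D_{\mu_k}(Q_i)^2\,2^{-2\eps[Q_0:Q_i]}\mu_k(Q_i)<D_{\mu_k}(Q_0)^2\mu_k(Q_0)\leq C'.$$
Letting $k\to\infty$ then forces $cT^2\mu(E_T)\leq C'$, i.e. $\mu(E_T)\leq C/T^2$.

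The main obstacle is the limit-to-prelimit bookkeeping in the last paragraph: the upward $\dysel^{(k)}(\mu_k)$-replacement depends on $k$, so one must verify that these replacements remain inside $3Q_0$ with disjoint triples uniformly along some subsequence. Because $\{Q_i\}$ is a fixed finite family already in $\dysel(\mu)$ and the growth (\ref{limitedgrowth}) caps how far any replacement can travel (by the same argument that capped $\widetilde Q_x$ above), this is a matter of standard bookkeeping and a diagonal subsequence, with no new ideas beyond weak convergence and doubling.
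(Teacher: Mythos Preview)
Your overall strategy---manufacture a bunch that would illegally dominate $Q_0$ from below in $\mu_k$---is exactly the paper's, but you run the machine in a different order, and this creates real friction.

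The paper first truncates to a \emph{finite} family $\dy'_{m,T}$ (cubes with $\ell\in[2^{-m},2^m]$), passes the density inequality to $\mu_k$ via lower semicontinuity on open cubes, and then carries out \emph{all} of the upward replacement, Vitali selection, doubling, and bunch-inequality \emph{at the $\mu_k$ level}. The only limit step is the trivial one: $\mu\bigl(\bigcup_{Q'\in\dy'_{m,T}}Q'\bigl)\leq\liminf_k\mu_k\bigl(\bigcup_{Q'\in\dy'_{m,T}}Q'\bigl)\leq C/T^2$, followed by $m\to\infty$. No $\dysel(\mu)$ ever appears, and no convergence of individual cube densities is needed.

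Your route---select witnesses in $\mu$, upward-replace in $\dysel(\mu)$, Vitali in $\mu$, then push to $\mu_k$---has three concrete problems. First, Vitali on $\{3\widetilde Q_x\}$ gives a \emph{countable} subfamily, not a finite one; without a scale truncation there is no lower bound on $\ell(\widetilde Q_x)$ (you have no control on small-scale densities of $\mu$). This is fixable by truncating scales and then taking finitely many cubes capturing most of the mass, but you should say so. Second, your ``slight perturbation of side-lengths'' to force $\mu(\partial Q_i)=0$ is incompatible with the subsequent dyadic upward replacement in $\dysel^{(k)}(\mu_k)$: perturbed cubes are no longer in $\dy^{(k)}$. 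Moreover the perturbation is unnecessary---only $\liminf_k\mu_k(Q_i)\geq\mu(Q_i)$ is needed, and that holds for open cubes without any adjustment. Third, and most seriously, after the ``final $\dysel^{(k)}(\mu_k)$-upward replacement'' the bunch inequality bounds $\sum_j D_{\mu_k}(P_j^{(k)})^2 2^{-2\eps[\cdot]}\mu_k(P_j^{(k)})$ for the \emph{replacements} $P_j^{(k)}$ (after yet another Vitali to restore disjoint triples), not for your original $Q_i$. You cannot therefore let $k\to\infty$ in the displayed sum over $Q_i$ as written; you must instead argue via doubling that $\mu_k\bigl(\bigcup_i Q_i\bigl)\leq C\sum_j\mu_k(P_j^{(k)})\leq C/T^2$ and then take $\liminf$. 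This works, but it means your $\dysel(\mu)$-replacement and first Vitali were redundant---they bought nothing that the $\mu_k$-level Vitali does not already give.

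In short: drop the $\dysel(\mu)$ layer entirely, truncate scales at the outset, and do the replacement/Vitali/bunch argument once, in $\mu_k$. That is the paper's proof, and it sidesteps every obstacle you flagged.
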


Fix $m\in \mathbb{N}$, and consider the collection of cubes
\begin{equation}\begin{split}\nonumber\mathcal{D}'_{m, T} = \Bigl\{Q'\in \dy': \, Q'\cap 2Q_0\neq \varnothing, &\,\ell(Q')\in [2^{-m}, 2^m],\\ &\text{ and }D_{\mu}(Q')2^{-\eps[Q':Q_0]}>T\Bigl\}.
\end{split}\end{equation}
Since the lattices $\mathcal{D}^{(k)}$ stabilize, as long as $k$ is sufficiently large we have that every $Q'\in \mathcal{D}'_{m, T}$ lies in $\mathcal{D}^{(k)}$.  Also as $D_{\mu_k}(Q_0)=\tfrac{1}{3^s}$, we see that, provided $k$ is large enough,
$$D_{\mu_k}(Q')>T2^{\eps[Q':Q_0]}\geq T2^{\eps[Q':Q_0]}D_{\mu_k}(Q_0) \text{ for every }Q'\in \dy'_{m, T}.
$$

We begin with a simple auxiliary claim.

\begin{cla}\label{hatQclaim} Fix $T> 16^s2^{4\eps}$, and $k\in \mathbb{N}$.  For every $Q'\in \dy^{(k)}$ that intersects $2Q_0$ and satisfies
\begin{equation}\label{dkTdens}
D_{\mu_k}(Q')> T2^{\eps[Q':Q_0]}D_{\mu_k}(Q_0),
\end{equation}
we have that $\ell(Q')\leq \ell(Q_0)/4$, and so $3Q'\subset 3Q_0$.
\end{cla}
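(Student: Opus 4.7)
The plan is a proof by contradiction. Suppose $\ell(Q')>\ell(Q_0)/4$; since ratios of sides in $\dy^{(k)}$ are powers of $2$, this forces $\ell(Q')\geq \ell(Q_0)/2$. I will exhibit a cube $\wh Q\in \dy^{(k)}$ containing $Q_0\cup Q'$ with a controlled side, and use the fact that $Q_0\in \dysel^{(k)}(\mu_k)$ cannot be dominated from above to obtain
\[
D_{\mu_k}(\wh Q)<2^{\eps[\wh Q:Q_0]}D_{\mu_k}(Q_0).
\]
Together with $\mu_k(\wh Q)\geq \mu_k(Q')=D_{\mu_k}(Q')\ell(Q')^s$ and the density hypothesis (\ref{dkTdens}), this will yield
\[
T<2^{\eps([\wh Q:Q_0]-[Q':Q_0])}\bigl(\ell(\wh Q)/\ell(Q')\bigr)^s,
\]
which I will drive below $16^s 2^{4\eps}$, contradicting the hypothesis on $T$.

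I would split on $a:=\log_2(\ell(Q')/\ell(Q_0))\geq -1$. For $a\geq 0$, take $\wh Q=\wt{\wt{Q'}}$, the great-grandparent of $Q'$ in $\dy^{(k)}$, of side $16\ell(Q')$. Let $c'$, $c_0$ be the centres of $Q'$, $Q_0$. The open-intersection condition $Q'\cap 2Q_0\neq \varnothing$ gives $\|c'-c_0\|_\infty<\ell(Q')/2+\ell(Q_0)$, while the centre of $\wt{\wt{Q'}}$ lies within $\tfrac{5}{2}\ell(Q')$ of $c'$. Using $\ell(Q_0)\leq \ell(Q')$, a short computation places $Q_0$ within $\tfrac{9}{2}\ell(Q')<8\ell(Q')$ of the centre of $\wt{\wt{Q'}}$, so $Q_0\subset \wt{\wt{Q'}}$. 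Substituting $[\wh Q:Q_0]=4+a$, $[Q':Q_0]=a$, and $\ell(\wh Q)/\ell(Q')=16$ yields exactly $T<16^s 2^{4\eps}$.

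For the remaining case $a=-1$, take instead $\wh Q=\wt{Q_0}$, the grandparent of $Q_0$, of side $4\ell(Q_0)$. The open-intersection inequality becomes $\|c'-c_0\|_\infty<\tfrac{5}{4}\ell(Q_0)$, which together with $\ell(Q')/2=\ell(Q_0)/4$ places $Q'$ strictly within $\tfrac{3}{2}\ell(Q_0)$ of $c_0$; hence $Q'\subset 3Q_0$. The grandparent containment $\wt{Q_0}\supset 3Q_0$ from Lemma~\ref{dycubecontain} then forces $Q'\subset \wt{Q_0}$, and substituting $[\wh Q:Q_0]=2$, $[Q':Q_0]=1$, $\ell(\wh Q)/\ell(Q')=8$ gives $T<2^{\eps+3s}<16^s 2^{4\eps}$. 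The final conclusion $3Q'\subset 3Q_0$ in the claim statement then follows routinely from $\ell(Q')\leq \ell(Q_0)/4$ and $Q'\cap 2Q_0\neq \varnothing$.

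I expect the case $a=-1$ to be the main obstacle. A uniform choice such as $\wh Q=\wt{\wt{Q'}}$ would cost a factor of roughly $2^s$ more and only yield $T<2^{3\eps+5s}$, which does not beat $16^s 2^{4\eps}$ for small $\eps$. The sharper choice $\wh Q=\wt{Q_0}$ works because a cube of side $\ell(Q_0)/2$ meeting the open cube $2Q_0$ is forced strictly inside $3Q_0$, fitting exactly the grandparent containment of Lemma~\ref{dycubecontain}.
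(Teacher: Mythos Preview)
Your proof is correct and follows the same core idea as the paper: produce an ancestor cube $\wh Q\in\dy^{(k)}$ containing both $Q_0$ and $Q'$, then use that $Q_0\in\dysel^{(k)}(\mu_k)$ cannot be dominated from above. The paper, however, does not split into cases. It takes $Q''$ to be the grandparent of $Q'$ (so $Q''\supset 3Q'$), observes that $3Q'$ already meets $Q_0$ whenever $\ell(Q')\geq \ell(Q_0)/2$, and then applies Lemma~\ref{dycubecontain} a second time (with $P=Q_0$, $Q=Q''$) to get that the grandparent $\wt Q'$ of $Q''$ contains $Q_0$. This gives the same cube as your $\wt{\wt{Q'}}$, uniformly in $a\geq -1$.

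Your final paragraph is therefore mistaken: the uniform choice $\wh Q=\wt{\wt{Q'}}$ \emph{does} work for $a=-1$. In that case $\ell(\wh Q)=8\ell(Q_0)$, so $[\wh Q:Q_0]=3$, $[Q':Q_0]=1$, and your displayed inequality gives $T<2^{2\eps}\cdot 16^s<16^s2^{4\eps}$, not $2^{3\eps+5s}$. (Your direct metric estimate for $Q_0\subset\wt{\wt{Q'}}$ also goes through for $a=-1$: the center displacement is at most $\tfrac{5}{2}\ell(Q')+\tfrac{5}{4}\ell(Q_0)=\tfrac{5}{2}\ell(Q_0)$, and adding $\ell(Q_0)/2$ stays below the half-side $4\ell(Q_0)$.) So the separate treatment of $a=-1$ via $\wt{Q_0}$ is valid but unnecessary.
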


\begin{proof}[Proof of the claim]  Suppose to the contrary that $\ell(Q')\geq \ell(Q_0)/2$.  First note that, as a consequence of Lemma \ref{dycubecontain}, the grandparent $Q''$ of $Q'$ contains $3Q'$ and so in particular intersects $Q_0$.  But then as $\ell(Q'')\geq 2\ell(Q_0)$, Lemma \ref{dycubecontain} ensures that the grandparent $\wt{Q}'$ of $Q''$ contains $Q_0$.  Hence
$$D_{\mu_k}(\wt{Q}')\geq \frac{2^{-4\eps}}{16^s}2^{\eps[\wt{Q}': Q']}D_{\mu_k}(Q')\geq \frac{2^{-4\eps}}{16^s}T2^{\eps[\wt{Q}': Q_0]}D_{\mu_k}(Q_0).
$$
Under the condition on $T$, this contradicts the fact that $Q_0$, as a member of $\dyselA^{(k)}(\mu_k)$, cannot be dominated from above.  The claim is proved.
\end{proof}


\begin{proof}[Proof of the lemma]  It suffices to show that, for $T>16^s2^{4\eps}$, $$\mu_k\Bigl(\bigcup_{Q'\in \dy'_{m, T}}Q'\Bigl)\leq \frac{C}{T^2}$$ for sufficiently large $k$.  Fix $k$ large enough to ensure that $\dy'_{m,T}\subset \dy^{(k)}$ and (\ref{dkTdens}) holds for every $Q'\in \dy'_{m,T}$.  We claim that each $Q'\in \dy'_{m,T}$ is contained in some cube $\wh{Q}'$ with the following properties:
\begin{itemize}
\item  $D_{\mu_k}(\wh{Q}')> T2^{\eps[\wh{Q}': Q_0]}D_{\mu_k}(Q_0)$,
\item  $\wh{Q}'\in \dysel^{(k)}(\mu_k)$,
\item  $3\wh{Q}'\subset 3Q_0$.
\end{itemize}
 Indeed, for $Q'\in \dy'_{m, T}$, either $Q'\in \dysel^{(k)}(\mu)$ (in which case we can take  $\wh{Q}'=Q'$), or $Q'$ can be dominated from above by a cube $\wh{Q}'\in \dysel^{(k)}(\mu)$.   Either way, $\wh{Q}'$ satisfies the first two properties.  The third property now follows from Claim \ref{hatQclaim} applied to $\wh{Q}'\in \dy^{(k)}$.

Now, with the aid of the Vitali covering lemma, we choose a subcollection $(\wh{Q}'_j)_j$ with $3\wh{Q}'_j$ disjoint, and such that $15\wh{Q}'_j$ cover the union of the cubes in $\dy'_{m,T}$.  But then, since $\wh{Q}_j'\in \dysel^{(k)}(\mu_k)$, we have that $\mu_k(15\wh{Q}_j')\leq C\mu_k(\wh{Q}_j')$, and so,
\begin{equation}\begin{split}\nonumber\sum_j \mu_k(15\wh{Q}'_j)&\leq   C\sum_j \mu_k(\wh{Q}'_j)\\&\leq \frac{C}{T^2D_{\mu_k}(Q_0)^2}\sum_j D_{\mu_k}(\wh{Q}'_j)^22^{-2\eps[\wh{Q}'_j:Q_0]}\mu_k(\wh{Q}'_j),
\end{split}\end{equation}
but insofar as $Q_0\in \dyselA^{(k)}(\mu_k)$, the right hand side is at most $\frac{C}{T^2}\mu_k(Q_0)\leq \frac{C}{T^2}$.  The lemma follows.
\end{proof}




We shall not rely on the full strength of this lemma, as we shall only use it in conjunction with the following rather simple result.

\begin{lem}\label{nochargelowerdim} If $\nu$ is a measure satisfying
$$\nu\bigl(\bigl\{x\in 2Q_0: \overline{D}_{\nu,\eps}(x)>T\bigl\}\bigl)\rightarrow 0 \text{ as }T\to \infty,$$
then $\nu(L\cap \overline{Q}_0)=0$ for any $\lfloor s\rfloor$-plane $L$.\end{lem}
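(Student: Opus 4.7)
I would argue by contradiction. Suppose $\nu_L := \nu|_{L \cap \overline{Q}_0}$ is a nonzero finite Radon measure; I aim to show that $\overline{D}_{\nu,\eps}(x) = +\infty$ for $\nu_L$-almost every $x$. Since $L \cap \overline{Q}_0 \subset 2Q_0$, this suffices: letting $T \to \infty$ in the hypothesis would then force $\nu_L(L \cap \overline{Q}_0) = 0$, a contradiction.

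The plan is to convert a pointwise upper bound on $\overline{D}_{\nu,\eps}$ into a pointwise upper bound on the $(s-\eps)$-dimensional upper ball density of $\nu_L$, and then to invoke a classical Hausdorff dimension estimate. Fix $T > 0$ and set $A_T = \{x \in L \cap \overline{Q}_0 : \overline{D}_{\nu,\eps}(x) \leq T\}$. For $x \in A_T$ and large $k$, take the dyadic cube $\underline{Q}_k$ of side $2^{-k}$ containing $x$ and put $Q'_k = 3\underline{Q}_k \in \mathcal{D}'$. Because $x$ sits in the middle third $\underline{Q}_k$ of $Q'_k$, one has $B(x, 2^{-k}) \subset Q'_k$; meanwhile, the defining inequality $D_\nu(Q'_k) \leq T \cdot 2^{\eps[Q'_k : Q_0]}$ together with $\ell(Q'_k) = 3 \cdot 2^{-k}$ yields
$$\nu_L(B(x, 2^{-k})) \leq \nu(Q'_k) \leq T\, \ell(Q_0)^\eps\, \ell(Q'_k)^{s-\eps} \leq C\, T\, 2^{-k(s-\eps)}.$$
Consequently $\limsup_{r \to 0^+} \nu_L(B(x,r))/r^{s-\eps} \leq C'T$ for every $x \in A_T$.

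Next I would apply the classical upper-density comparison principle (for instance Mattila, \emph{Geometry of Sets and Measures in Euclidean Spaces}, Theorem 6.2(b)): a pointwise bound $M$ on the upper $t$-density along a Borel set $B$ gives $\nu_L(B) \leq 2^t M\,\mathcal{H}^t(B)$. With $t = s-\eps$ this produces $\nu_L(A_T) \leq C''\,T\,\mathcal{H}^{s-\eps}(L \cap \overline{Q}_0)$. The paper's choice of $\eps$ forces $s-\eps > \lfloor s \rfloor$ (the open interval $(s-\eps, s+\eps)$ contains no integer and $s \notin \mathbb{Z}$), so an $\lfloor s \rfloor$-plane intersected with any bounded set has vanishing $\mathcal{H}^{s-\eps}$-measure; hence $\nu_L(A_T) = 0$ for every $T$. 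Therefore, using $L \cap \overline{Q}_0 \subset 2Q_0$,
$$\nu(L \cap \overline{Q}_0) \leq \nu\bigl(\{x \in 2Q_0 : \overline{D}_{\nu,\eps}(x) > T\}\bigr) \xrightarrow[T \to \infty]{} 0.$$

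I expect no serious technical obstacle: both ingredients --- the middle-third geometry of triple cubes and the density-comparison lemma --- are routine. The one point worth underlining is the role of the weight $2^{-\eps[Q':Q_0]}$, which is precisely the exponent-shifting factor that converts the $s$-dimensional normalization $D_\nu$ into an $(s-\eps)$-dimensional one, and this shift is calibrated against the dimension $\lfloor s \rfloor$ of the plane exactly by the paper's integer-free choice of $\eps$.
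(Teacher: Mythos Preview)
Your proof is correct. The approach is closely related to the paper's but packaged differently: where the paper covers $L \cap \overline{Q}_0$ directly by at most $C\ell^{-\lfloor s\rfloor}$ cubes $Q_j\in\mathcal{D}'$ of side $\ell=3\cdot 2^{-n}$, splits them into low-density cubes ($D_\nu(Q_j)\leq T2^{\eps[Q_0:Q_j]}$, contributing at most $CT\ell^{s-\eps-\lfloor s\rfloor}$ in total) and high-density cubes (contained in $\{\overline{D}_{\nu,\eps}>T\}$), and then lets $\ell\to 0$ followed by $T\to\infty$, you instead invoke Mattila's density-comparison theorem as a black box to handle the low-density set $A_T$. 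The paper's version is entirely self-contained and in effect reproves the special case of the comparison principle needed here; yours makes the underlying mechanism---that a pointwise $(s-\eps)$-density bound forces $\nu_L\ll\mathcal{H}^{s-\eps}$---more explicit at the cost of an external reference. One minor stylistic point: the contradiction framing in your opening paragraph is superfluous, since your final display already yields $\nu(L\cap\overline{Q}_0)=0$ directly.
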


 This lemma can be seen as a consequence of standard theorems on the differentiation of measures (see for instance \cite{Mat}), but for the benefit of the reader we provide a direct proof.

\begin{proof}  Let $L$ be an $\lfloor s\rfloor$-plane.  Fix $T>0$.  Cover $L\cap \overline{Q}_0$ by at most $C\ell^{-\lfloor s \rfloor}$ cubes $Q_j\in \dy'$ of side-length $\ell =3\cdot 2^{-n}$, for $n\in \mathbb{N}$, $n\geq 2$.  If $D_{\nu}(Q_j)\leq T2^{\eps [Q_0:Q_j]}$, then $\nu(Q_j)\leq T3^{\eps}\ell^{s-\eps}$, and so the total measure of all such cubes $Q_j$ is at most $CT\ell^{s-\eps-\lfloor s \rfloor}$.  On the other hand, if $D_{\nu}(Q_j)\geq T2^{\eps [Q_0:Q_j]}$, then $Q_j\subset \bigl\{x\in 2Q_0: \overline{D}_{\nu,\eps}(x)>T\bigl\}$ (notice that $Q_j\cap \overline{Q}_0\neq \varnothing$ and $\ell(Q_j)< \tfrac{\ell(Q_0)}{2}$).  Therefore,
$$\nu(L\cap \overline{Q}_0)\leq CT\ell^{s-\eps-\lfloor s \rfloor}+\nu\bigl(\bigl\{x\in 2Q_0: \overline{D}_{\nu,\eps}(x)>T\bigl\}\bigl).
$$
Letting $\ell\rightarrow 0$ and then $T\rightarrow \infty$ yields that $\nu(L\cap \overline{Q}_0)=0$.\end{proof}


\section{The description of smoothly reflectionless measures}

The goal of this section is to derive a description of smoothly reflectionless measures.  A set of points $E\subset \R^d$ is said to be \emph{uniformly discrete} if there exists some $\delta>0$ such that  $|x-y|\geq \delta$ whenever $x,y\in E$, $x\neq y$.

\begin{prop}\label{description}  Suppose that $\mu$ is a smoothly reflectionless measure.  There exists a linear subspace $V$ of dimension $k\in \{0,\dots ,d\}$ along with a uniformly discrete set $E$ that is symmetric about each of its points (that is, if $x\in E$, and $y\in E$, then $2y-x\in E$), such that
$$\mu = \sum_{x\in E}f(x)\mathcal{H}^k|(V+x),
$$
where $f$ is a non-negative symmetric function on $E$ (symmetry here means that if $x,y\in E$, then $f(x) = f(2y-x)$).
\end{prop}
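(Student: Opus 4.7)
The plan is to extract from the reflectionless hypothesis a single algebraic identity that forces $\supp(\mu)$ to be a closed additive subgroup of $\mathbb{R}^d$; once the subgroup structure is in hand, the decomposition $V\oplus \Lambda$ of closed subgroups of $\mathbb{R}^d$ will supply both the linear subspace $V$ and the uniformly discrete set $E=\Lambda$, with the density $f$ emerging from the reflection identity evaluated on coset representatives.

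First I would translate so that $0\in \supp(\mu)$. Applying the reflectionless condition at $x\in \supp(\mu)$ and at $0$ and using the oddness of $\varphi$, I obtain
\begin{equation*}
\int \varphi(y)\,d\mu(y)=\int \varphi(y-x)\,d\mu(y)
\end{equation*}
for every odd $\varphi\in \Lip_0(\mathbb{R}^d)$ and every $x\in \supp(\mu)$. To convert this into geometric information about $\supp(\mu)$, I would next specialize $\varphi(u)=\rho(u-b)-\rho(u+b)$ for a nonnegative even bump $\rho\in \Lip_0$ and $b=x_1-x_0$ with $x_0,x_1\in \supp(\mu)$. A direct expansion of $(\varphi*\mu)(x_0)=(\varphi*\mu)(x_1)$ then yields the reflection identity
\begin{equation*}
(\rho*\mu)(2x_0-x_1)+(\rho*\mu)(2x_1-x_0)=(\rho*\mu)(x_0)+(\rho*\mu)(x_1). \qquad (\sharp)
\end{equation*}
As $\supp(\rho)$ shrinks to $\{0\}$ the right-hand side is strictly positive, so at least one of the reflected points $2x_0-x_1, 2x_1-x_0$ lies in $\supp(\mu)$. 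The crucial step is to upgrade this to \emph{both} points lying in $\supp(\mu)$, which I plan to do by iterating $(\sharp)$ along the chain of reflections: if the reflected point $2x_1-x_0$ is known to belong to $\supp(\mu)$, apply $(\sharp)$ to each of the pairs $\{x_0,2x_1-x_0\}$ and $\{x_1,2x_1-x_0\}$ and track the resulting local densities to rule out the absence of $2x_0-x_1$.

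Once $\supp(\mu)$ is closed under $(x,y)\mapsto 2x-y$ and contains $0$, the operations $(a,0)\mapsto 2a$ and $(a,b)\mapsto 2a-b$ produce all integer linear combinations, so $\supp(\mu)$ is a closed additive subgroup of $\mathbb{R}^d$, which splits as $V\oplus \Lambda$ with $V$ a linear subspace of some dimension $k$ and $\Lambda$ a discrete subgroup transverse to $V$. Set $E:=\Lambda$; local finiteness of $\mu$ forces $E$ to be uniformly discrete. On each affine plane $V+e$, the reflectionless identity specialized to directions $v\in V\subset \supp(\mu)$ shows that $\mu|(V+e)$ is invariant under all translations by $V$, so uniqueness of Haar measure gives $\mu|(V+e)=f(e)\mathcal{H}^k|(V+e)$ for some $f(e)\geq 0$. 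Re-inserting this structure into $(\sharp)$ applied to coset representatives $e_0,e_1\in E$ yields $f(2e_0-e_1)+f(2e_1-e_0)=f(e_0)+f(e_1)$; iterating this additive relation over the lattice structure of $E$, together with $f\geq 0$, forces the required symmetry $f(e)=f(2e'-e)$ for all $e,e'\in E$.

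I expect the main obstacle to be the ``at least one $\implies$ both'' upgrade in the reflection-closure step. If direct iteration of $(\sharp)$ does not close the argument, an alternative is to define $V$ from the outset as the maximal linear subspace contained in $\supp(\mu)$ and pass to the quotient $\mathbb{R}^d/V$, on which $\mu$ descends to a smoothly reflectionless measure whose support has no continuous translation symmetries. In that reduced, essentially discrete setting, the closure of the support under $(x,y)\mapsto 2x-y$ becomes a combinatorial statement about uniformly discrete sets and is more tractable.
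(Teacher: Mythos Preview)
Your identity $(\sharp)$ is correct and useful, but the plan built on it has a genuine error: the assertion that a closed set $S\ni 0$ which is closed under $(x,y)\mapsto 2x-y$ must be an additive subgroup is \emph{false}. Take the subset of $\mathbb{Z}^2$ generated from $\{0,e_1,e_2\}$ by iterated point reflections. A parity check shows that every element $(m,n)$ with $m+n$ even that you can ever produce lies in $2\mathbb{Z}^2$ (since $2x-y$ has the parity of $y$, and the only even-parity seed is $0$); in particular $e_1+e_2$ is never reached. This set is closed in $\mathbb{R}^2$, contains $0$, is reflection-symmetric about each of its points, and yet is not a group. Correspondingly, the Proposition only claims that $E$ is symmetric about each of its points, not that it is a lattice, and the paper exhibits such measures as genuinely reflectionless. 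So the structure theorem for closed subgroups is not available, and your derivation of the symmetry of $f$ from the ``lattice structure of $E$'' collapses with it.

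The paper's route is close to your stated alternative. Instead of your two-point identity $(\sharp)$, it keeps the stronger one-point identity $\mu(x+B)-\mu(x-B)=\Lambda_B$ for every $x\in\supp(\mu)$, which you effectively discarded when subtracting. With $x,y\in\supp(\mu)$ and $z=y-x$, a sign choice on $\Lambda_{B(z,r_j)}$ gives the one-sided progression $x+kz\in\supp(\mu)$ for $k\ge 0$; then a telescoping sum shows $\Lambda_{B(mkz,r)}=m\Lambda_{B(kz,r)}$, and nonnegativity of $\mu$ forces $\Lambda_{B(kz,r)}=0$, which in one stroke yields both the missing direction $x-kz\in\supp(\mu)$ \emph{and} the measure identity $\mu(B(x+2kz,r))=\mu(B(x,r))$. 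That measure identity, not $(\sharp)$, is what delivers the symmetry of $f$ directly. From there the paper takes $V$ to be the maximal linear subspace through a fixed point of the support, shows by an accumulation argument that $V$ is isolated (hence its parallel translates through $\supp(\mu)$ are uniformly separated), and sets $E$ to be the orthogonal cross-section; the uniqueness of uniformly distributed measures on each $V+x$ finishes the identification with $f(x)\mathcal{H}^k|(V+x)$. Your ``upgrade'' obstacle is real, but it is resolved by the telescoping positivity argument with the one-point constant $\Lambda_B$, not by iterating $(\sharp)$.
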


The proposition gives a complete description of smoothly reflectionless measures:  Any measure of the form $\sum_{x\in E}f(x)\mathcal{H}^k|(V+x)$, for a symmetric uniformly discrete set $E$, a  non-negative symmetric function $f$ on $E$, and a linear subspace $V$ of dimension $k$, is smoothly reflectionless.

Notice that the characteristic function of an (open) ball can be expressed as the monotone non-decreasing limit of a sequence of functions in $\Lip_0(\R^d)$.  Consequently, we see that if $\mu$ is smoothly reflectionless, then for every (open) ball $B$, there is a constant $\Lambda_B\in \R$ such that
$$\mu(x+B) - \mu(x-B) = [(\chi_{-B}-\chi_{B})*\mu](x)=\Lambda_B \text{ for all }x\in \supp(\mu).
$$

\begin{lem}\label{2ptlem}  Suppose that $\mu$ is smoothly reflectionless, and $x,y\in \supp(\mu)$.  With $z=y-x$, the points $x+kz$ are contained in $\supp(\mu)$ for all $k\in \mathbb{Z}$ and moreover $$\mu(B(x+2kz, r)) = \mu(B(x,r))\text{ whenever }k\in \mathbb{Z}\text{ and }r>0.$$
\end{lem}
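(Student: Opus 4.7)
The key tool is the identity noted in the remark just before the lemma: for every open ball $B$ there exists $\Lambda_B\in\R$ with $\mu(\xi+B)-\mu(\xi-B)=\Lambda_B$ for every $\xi\in\supp(\mu)$. Taking $B=B(w,r)$ we obtain $\mu(B(\xi+w,r))-\mu(B(\xi-w,r))=:\Lambda(w,r)$ independent of $\xi\in\supp(\mu)$. Set $x_k=x+kz$ and, for a fixed $r>0$, $g(k)=\mu(B(x_k,r))$. My plan has three steps: (a) show $x_2\in\supp(\mu)$ by contradiction; (b) iterate to get $x_k\in\supp(\mu)$ for every $k\in\mathbb{Z}$; (c) use the resulting recurrence together with non-negativity of $g$ to force $\Lambda(z,r)=0$.

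For step (a), suppose to the contrary that $x_2\notin\supp(\mu)$, and fix $r_0>0$ small enough that $\mu(B(x_2,r_0))=0$ while $\mu(B(x,r_0)),\mu(B(y,r_0))>0$; the same holds for every $r\le r_0$, which I tacitly assume. The identity with $w=z$ at $\xi=x$ and $\xi=y$ gives
\[\Lambda(z,r)=g(1)-g(-1)=g(2)-g(0)=-g(0),\]
so $g(-1)=g(0)+g(1)>0$ and $x_{-1}\in\supp(\mu)$. Inducting on the relation $g(k-1)=g(k+1)+g(0)$ (the identity with $w=z$ at $\xi=x_k\in\supp(\mu)$) for $k=0,-1,-2,\ldots$ yields $x_{-m}\in\supp(\mu)$ for all $m\ge 0$ with
\[g(-2m)=(m+1)\,g(0),\qquad g(-2m-1)=g(1)+(m+1)\,g(0).\]
The identity with $w=2z$ at $\xi=x$ gives $\Lambda(2z,r)=g(2)-g(-2)=-2g(0)$, and at $\xi=y$ reads $g(3)-g(-1)=\Lambda(2z,r)$, so $g(3)=g(1)-g(0)$. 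Finally, $w=3z$ at $\xi=x$ gives $\Lambda(3z,r)=g(3)-g(-3)=-3g(0)$, and at $\xi=y$ reads $g(4)=g(-2)+\Lambda(3z,r)=2g(0)-3g(0)=-g(0)<0$, which is absurd. Hence $x_2\in\supp(\mu)$.

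Step (a), applied to an arbitrary pair $(a,b)$ of points in $\supp(\mu)$ in place of $(x,y)$, yields the closure property $\{a,b\}\subset\supp(\mu)\Rightarrow 2b-a\in\supp(\mu)$. In particular, taking $(a,b)=(y,x)$ gives $x-z\in\supp(\mu)$, and iterating this closure on pairs drawn from $\{x_k\}$ generates $x_k\in\supp(\mu)$ for every $k\in\mathbb{Z}$, proving step (b). For step (c), with every $x_k\in\supp(\mu)$, the identity with $w=z$ at each $\xi=x_k$ gives $g(k+1)-g(k-1)=\Lambda(z,r)$ for every $k\in\mathbb{Z}$, hence $g(2m)=g(0)+m\,\Lambda(z,r)$ for all $m\in\mathbb{Z}$. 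Non-negativity of $g$ as $m\to\pm\infty$ forces $\Lambda(z,r)=0$, so $\mu(B(x+2mz,r))=\mu(B(x,r))$ for every $m\in\mathbb{Z}$ and $r>0$. The main obstacle is the finely-tuned cascade in step (a): one must employ the ball identity at $B(kz,r)$ for $k=1,2,3$ in concert, and the specific cancellation producing $g(4)=-g(0)$ is the essential computation on which the whole lemma rests.
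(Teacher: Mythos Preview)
Your proof is correct, but it takes a different route to the key step---establishing that $x_2=x+2z\in\supp(\mu)$---than the paper does.

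The paper avoids your cascade entirely. Instead of assuming $x_2\notin\supp(\mu)$ and pushing the identities at $w=z,2z,3z$ until $g(4)<0$, it simply selects a sequence of radii $r_j\to 0$ along which $\Lambda(z,r_j)$ has a fixed sign (this is always possible). After relabelling $x$ and $y$ if necessary so that $\Lambda(z,r_j)\ge 0$, the identity at $\xi=y$ gives $\mu(B(x+2z,r_j))\ge \mu(B(x,r_j))>0$ immediately, so $x_2\in\supp(\mu)$. Iterating forward gives $x_k\in\supp(\mu)$ for all $k\ge 0$; only then does the paper run your step~(c) style recurrence (in a slightly asymmetric form, since it has only the forward half-line at that point) to get $\Lambda(kz,r)=0$ and fill in the negative $k$.

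What each approach buys: the paper's sign-selection trick is shorter and more conceptual---one line in place of your carefully tuned chain at $w=z,2z,3z$---at the small cost of passing to a subsequence of radii. Your argument is more computational but works at a single fixed radius and yields the full reflection-closure property $2b-a\in\supp(\mu)$ directly, so that both directions $k\to\pm\infty$ come out symmetrically in step~(b). Your closing remark that the ``finely-tuned cascade'' is ``the essential computation on which the whole lemma rests'' is therefore a bit overstated: the paper shows it can be bypassed.
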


\begin{proof}  First choose a sequence of radii $r_j\to 0^+$ with the property that the associated balls $B_j = B(z, r_j)$ satisfy either $\Lambda_{B_j}\geq 0$ for every $j$, or $\Lambda_{B_j}\leq 0$ for every $j$.  By relabelling $x$ and $y$ if necessary (thus replacing $B_j$ by $-B_j$), we may assume that $\Lambda_{B_j}\geq 0$ for every $j$.

Since $x\in \supp(\mu)$, we have $\mu(B(x,r_j))>0$ for every $j$. On the other hand, since $y\in \supp(\mu)$, and $\mu$ is smoothly reflectionless, we have
$$\mu(B(x+2z,r_j)) - \mu(B(x,r_j))=\mu(y+B(z, r_j)) - \mu(y-B(z,r_j)) = \Lambda_{B_j}.$$
Consequently $\mu(B(x+2z, r_j))\geq \mu(B(x, r_j))>0$ for every $j$ and so $x+2z\in \supp(\mu)$.  Repeating this argument with $x+2z=y+z$ playing the role of $y$, and $y=x+z$ playing the role of $x$, we get that $x+3z\in \supp(\mu)$.  Continuing in this fashion we see that $x+kz\in \supp(\mu)$ for all $k\in \mathbb{Z}_+$.

Now take a ball $B = B(kz, r)$ with $r>0$ and $k\in \mathbb{N}$.  Consider the constant $\Lambda_B\in \R$.  We claim that $\Lambda_B=0$.

 Let us suppose to the contrary that $\Lambda_B \neq 0$.  For $m\in \mathbb{N}$, notice that the ball $B^{(m)} = B(mkz,r)$ has reflectionless constant $\Lambda_{B^{(m)}} = m\Lambda_B$ for $m\in \mathbb{N}$.  To see this, merely write
\begin{equation}\begin{split}\nonumber\mu&(B(x+2mkz,r))-\mu(B(x,r)) \\&= \sum_{j=1}^m\bigl[ \mu(B(x+ 2jkz,r)) - \mu(B(x+2(j-1)kz,r))\bigl].\end{split}\end{equation}
 Using that $x+jz\in \supp(\mu)$ for all $j\in \mathbb{N}$, we see the left hand side of this identity equals $\Lambda_{B^{(m)}}$, while the right hand side equals $m\Lambda_B$. In the event that $\Lambda_B<0$, notice that the right hand side of the equality
\begin{equation}\label{mdiffequal}\mu(B(x+2mkz, r)) = m\Lambda_B +\mu(B(x,r)),\end{equation}
 can be made negative by choosing $m$ sufficiently large, which is absurd given that $\mu$ is a non-negative set function.  But if $\Lambda_B>0$  then using the reflectionless property at $x$, along with the identity $\Lambda_{B^{(2m)}}=2m\Lambda_B$, we obtain
$$\mu(B(x+2m k z,r)) - \mu(B(x-2m k z,r)) = 2m \Lambda_B,
$$
which, when combined with (\ref{mdiffequal}) yields that $\mu(B(x-2m k z,r)) = \mu(B(x,r))-m \Lambda_B$.  Then $\mu(B(x-2m k z,r))<0$ for large enough $m$, which is again absurd.  The claim is proved.

But now, one readily uses the reflectionless property at $x$ to deduce that $\mu(B(x-kz,r)) = \mu(B(x+kz,r))>0$ whenever $r>0$, and so $x-kz\in \supp(\mu)$ for $k\in \mathbb{N}$.  Finally, if $k\in \mathbb{Z}$ then we may use the reflectionless property at $x+kz$ to derive that $\mu(B(x+2kz, r)) = \mu(B(x,r))$ for any $r>0$.  The lemma is proved.
\end{proof}

\begin{lem}\label{reflplane}  Suppose that $\mu$ is a smoothly reflectionless measure.  If $V$ is a linear subspace, and $V+x_0\subset \supp(\mu)$ for some $x_0\in \supp(\mu)$, then for any $y\in \supp(\mu)$, we have that $V+y\subset \supp(\mu)$.
\end{lem}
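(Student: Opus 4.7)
The plan is to exploit the translation symmetries that Lemma~\ref{2ptlem} already gives us. The key observation is that the lemma, applied with its parameter $k=2$, shows that whenever $p,q\in\supp(\mu)$, the reflection of $q$ about $p$ (namely $2p-q$) again lies in $\supp(\mu)$: indeed, taking $x=q$ and $y=p$ in Lemma~\ref{2ptlem}, the point $x+2(y-x)=2p-q$ is in $\supp(\mu)$. So every ``point reflection'' $q\mapsto 2p-q$ with $p\in\supp(\mu)$ preserves the support.

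Now fix an arbitrary $y\in\supp(\mu)$ and an arbitrary $v\in V$; I want to show $y+v\in\supp(\mu)$. First I reflect $y$ about $x_0\in\supp(\mu)$ to obtain
\[
2x_0-y\in\supp(\mu).
\]
Since $v\in V$, the hypothesis $V+x_0\subset\supp(\mu)$ gives $x_0+v\in\supp(\mu)$, so I may reflect $2x_0-y$ about $x_0+v$ and obtain
\[
2(x_0+v)-(2x_0-y)=y+2v\in\supp(\mu).
\]

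The small wrinkle, and the only thing that requires more than a one-line appeal to Lemma~\ref{2ptlem}, is that the two reflections produce $y+2v$ rather than $y+v$. This is overcome by using that $V$ is a \emph{linear} subspace: for any $v\in V$ we have $v/2\in V$, so applying the construction above with $v$ replaced by $v/2$ yields $y+v\in\supp(\mu)$. Since $v\in V$ was arbitrary, $V+y\subset\supp(\mu)$, as required.
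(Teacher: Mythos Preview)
Your proof is correct and follows essentially the same approach as the paper: both arguments use two successive point reflections (furnished by Lemma~\ref{2ptlem}) through points of the affine plane $V+x_0$, together with the fact that $V$ is a linear subspace (you use $v/2\in V$, the paper uses $2V=V$), to land on $y+v$. The only cosmetic difference is that the paper first normalizes to $x_0=0$ and phrases the first reflection as producing the entire plane $V-y$ before reflecting back to $V+y$.
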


\begin{proof} We may assume that $x_0=0$, so $V\subset\supp(\mu)$.  Lemma \ref{2ptlem}  ensures that the reflection of $y\in \supp(\mu)$ about each point in $V$ lies in $\supp(\mu)$.  As $v$ runs over $V$, the reflection of $y$ about $v$ runs over $V-y$.  Since then both $V$ and $V-y$ are contained in $\supp(\mu)$, we readily conclude from Lemma \ref{2ptlem} that $V+y\subset \supp(\mu)$.
\end{proof}

\begin{lem}\label{accumulation}  Suppose that $\mu$ is a smoothly reflectionless measure, and $V$ is a linear subspace with $V+x_0\subset \supp(\mu)$ for some $x_0\in \supp(\mu)$. If $\dist(x_0+V, \supp(\mu)\backslash (x_0+V))=0$, then $x_0+\operatorname{span}(V,e)\subset \supp(\mu)$ for some vector $e$ that is perpendicular to $V$.
\end{lem}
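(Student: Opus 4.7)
The plan is to translate the hypothesis $\dist(x_0+V, \supp(\mu)\setminus(x_0+V))=0$ into the existence of points in $\supp(\mu)$ at arbitrarily small, nonzero distance from $x_0+V$, then use Lemmas \ref{2ptlem} and \ref{reflplane} to generate a one-parameter family of points perpendicular to $V$ in $\supp(\mu)$, and extract a limiting direction $e$ via a compactness argument.

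After translating so that $x_0=0$ (so $V\subset\supp(\mu)$), I would choose a sequence $y_n\in\supp(\mu)\setminus V$ with $\dist(y_n,V)\to 0$ and decompose $y_n=v_n+e_n$ with $v_n\in V$, $e_n\in V^\perp$, $e_n\neq 0$, and $|e_n|\to 0$. Lemma \ref{reflplane} applied with base point $0\in\supp(\mu)$ then gives $V+y_n\subset\supp(\mu)$, and in particular $e_n=y_n-v_n\in\supp(\mu)$. Next, Lemma \ref{2ptlem} applied to the pair $0,e_n\in\supp(\mu)$ yields $ke_n\in\supp(\mu)$ for every $k\in\mathbb{Z}$.

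Now I would pass to a subsequence (not relabeled) so that the unit vectors $\hat e_n=e_n/|e_n|\in V^\perp$ converge to some unit vector $e\in V^\perp$, using compactness of the unit sphere of $V^\perp$. For an arbitrary $t\in\mathbb{R}$, set $k_n=\lfloor t/|e_n|\rfloor$. Then
\[
|k_n e_n-te|\leq |k_n|e_n|-t|\cdot|\hat e_n|+|t|\cdot|\hat e_n-e|\leq |e_n|+|t|\cdot|\hat e_n-e|\to 0,
\]
so $k_n e_n\to te$. Since every $k_n e_n\in\supp(\mu)$ and $\supp(\mu)$ is closed, $te\in\supp(\mu)$. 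Applying Lemma \ref{reflplane} once more with the plane $V$ (through $0\in\supp(\mu)$) and the point $te\in\supp(\mu)$, we obtain $V+te\subset\supp(\mu)$ for every $t\in\mathbb{R}$, which is exactly $\operatorname{span}(V,e)\subset\supp(\mu)$. Translating back gives $x_0+\operatorname{span}(V,e)\subset\supp(\mu)$.

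The only point that requires any care is the extraction of the limiting direction: I need that $e\neq 0$, which is automatic from $|\hat e_n|=1$, and that by choosing integer multiples $k_n$ of vectors of shrinking length I can densely hit any prescribed real multiple of $e$. The latter is immediate from $|e_n|\to 0$, so there is no genuine obstacle; the lemma really is a routine compactness consequence of the two earlier structural lemmas.
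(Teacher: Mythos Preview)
Your proof is correct and follows essentially the same approach as the paper's: translate to $x_0=0$, use Lemma~\ref{reflplane} to slide the approximating points along $V$ so that they lie in $V^\perp$, use Lemma~\ref{2ptlem} to get all integer multiples, extract a limiting unit direction $e\in V^\perp$ by compactness, and close up using that $\supp(\mu)$ is closed together with one more application of Lemma~\ref{reflplane}. The only cosmetic difference is the order of the last two steps---the paper first applies Lemma~\ref{reflplane} to obtain $V+k d_j e_j\subset\supp(\mu)$ and then passes to the limit, whereas you first pass to the limit on the line $\mathbb{R}e$ and then invoke Lemma~\ref{reflplane}---but the arguments are otherwise identical.
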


\begin{proof} We may assume that $x_0=0$.  Since $\dist(V, \supp(\mu)\backslash V)=0$, there is a sequence of points $(x_j)_{j\geq 1}$ in $\supp(\mu)\backslash V$ with $\dist(x_j, V)\rightarrow 0$ as $j\rightarrow \infty$.  Consider the closest point $z_j$ on $V$ to $x_j$, and set $d_j = |x_j-z_j|$.   By passing to a subsequence if necessary we may assume that the unit vectors $e_j=\tfrac{x_j-z_j}{d_j}$ converge to a unit vector $e$ that is perpendicular to $V$. Lemma \ref{reflplane} ensures that $V+x_j\subset \supp(\mu)$, and so in particular $d_je_j\in \supp(\mu)$.  As $0\in \supp(\mu)$, Lemma \ref{2ptlem} guarantees that $kd_je_j\in \supp(\mu)$ for every $k\in \mathbb{Z}$.  But then from Lemma \ref{reflplane}  we infer that $V+kd_je_j \subset\supp(\mu)$ for all $k\in \mathbb{Z}$.  Since $d_j\to 0$ and $e_j\to e$ as $j\to\infty$, we conclude that $v+\lambda e\in \supp(\mu)$ for every $v\in V, \, \lambda\in \R$.
\end{proof}

\begin{proof}[Proof of Proposition \ref{description}]  Without loss of generality, we may assume that $0\in \supp(\mu)$. Set $V$ to be the linear subspace of maximal dimension for which $V\subset \supp(\mu)$.  Lemma \ref{accumulation} then ensures that $V$ is isolated: there exists $\delta>0$ such that $\dist(V, \supp(\mu)\backslash V)\geq \delta$.


Consider the set $E$ of points that lie in $\supp(\mu)$ and are orthogonal to $V$ (this set includes $0$).  We next claim that if $x,y\in E$, $x\neq y$, then $|x-y|\geq \delta/2$.  Indeed, if we can find $x,y\in E$ with $x\neq y$ and $|x-y|<\delta/2$, then from Lemma \ref{2ptlem} we infer that the reflection of $0$ about $x$, the point $2x$, lies in $\supp(\mu)$.  Reflecting $2x$ about $y$ yields that $2(y -x)\in \supp(\mu)$.  But this is impossible because $\dist(0, \supp(\mu)\backslash V)\geq \delta$.  Thus $E$ is a uniformly discrete set that, in accordance with Lemma \ref{2ptlem}, is symmetric about each of its points.  Notice that the planes $(V+x)_{x\in E}$ are pairwise $\tfrac{\delta}{2}$-separated.  We now claim that $$\supp(\mu) = \bigcup_{x\in E}(V+x).$$
Clearly, Lemma \ref{reflplane} guarantees that $\bigcup_{x\in E}(V+x)\subset \supp(\mu)$.   To see the opposite inclusion, note that if $y\in \supp(\mu)$, then $V+y\subset \supp(\mu)$ (Lemma \ref{reflplane} again).  Consequently, if $x$ denotes the closest point to $0$ on $V+y$, then $x$ lies in $E$, and $y\in V+x$.

The second assertion of Lemma \ref{2ptlem} ensures that whenever $r>0$, and $x,y\in E$, \begin{equation}\label{fsymmetry}\mu(B(z,r))=\mu(B(z',r))\text{ if }z\in V+x\text{ and }z'\in V+(2y-x).\end{equation}  It readily follows that for $x\in E$, the measure $\mu(B(z,r))$ does not depend on $z\in V+x$.  By the uniqueness of measures that are uniformly distributed on small balls (for instance see the proof of Theorem 3.4 in \cite{Mat}), we derive that $\mu|(V+x)$ equals $f(x)\mathcal{H}^{k}|(V+x)$ for some $f(x)\geq 0$, where $k$ is the dimension of $V$.  The symmetry of the function $f$ follows immediately from (\ref{fsymmetry}).
\end{proof}

\section{Contradiction}

To complete the verification that property (\textbf{A}) holds for the rule $\mathcal{F}$, we are required to show that there cannot exist a measure $\mu$ satisfying the following properties:
\begin{enumerate}
\item $\mu$ is smoothly reflectionless,
\item $\mu(\overline{Q}_0)\geq 1$,
\item $\mu(B(0,R))\leq CR^{s+\eps}$ for any $R\geq 1$, and
\item for sufficiently large $T$, $\mu\bigl(\bigl\{x\in 2Q_0: \overline{D}_{\mu, \eps}(x)\geq T\bigl\}\bigl)\leq \frac{C}{T^2}$.
\end{enumerate}

However, Proposition \ref{description} we have that $$\mu = \sum_{x\in E}f(x)\mathcal{H}^k|(V+x),
$$
for some $k$-dimensional linear subspace $V$, a uniformly discrete set $E$, and some non-negative function $f$ on $E$. 
Since the assumption of Lemma \ref{nochargelowerdim} is satisfied, and $\mu(\overline{Q}_0)\geq 1$, we have that $k>\lfloor s\rfloor$.   On the other hand, the growth condition (3) ensures that $k<\lfloor s\rfloor+1$.  This contradiction completes the proof of Theorem \ref{thm1}.




 \end{document}